\newtheorem{theorem}{Theorem}
\newtheorem*{theorem*}{Theorem}
\newtheorem*{proposition*}{Proposition}
\newtheorem{proposition}{Proposition}[section]
\newtheorem{lemma}[proposition]{Lemma}
\newtheorem{remark}[proposition]{Remark}
\newtheorem{problem}[proposition]{Problem}
\newtheorem{definition}[proposition]{Definition}
\newtheorem{example}[proposition]{Example}
\newtheorem{examples}[proposition]{Examples}
\newcommand{\mX}{\mathcal{X}}
\newcommand{\mIX}{\mathcal{I}_\mathcal{X}}
\newcommand{\mIY}{\mathcal{I}_\mathcal{Y}}
\newcommand{\mY}{\mathcal{Y}}
\newcommand{\mZ}{\mathcal{Z}}
\newcommand{\mT}{\mathcal{T}}
\newcommand{\R}{\mathbb{R}}
\newcommand{\catC}{\mathsf{C}}
\newcommand{\mnets}{\mathsf{N}}
\newcommand{\mmspaces}{\mathsf{M}}
\newcommand{\define}[1]{\textbf{#1}}
\newcommand{\dgw}{\mathrm{GW}_p}
\newcommand{\dgm}{\mathrm{GM}_p}
\newcommand{\dst}{\mathrm{GW}^{\mathrm{em}}_p}
\newcommand{\dstm}{\mathrm{GM}^{\mathrm{em}}_p}
\newcommand{\dw}{\mathrm{W}_p}
\newcommand{\dm}{\mathrm{M}_p}
\definecolor{darkblue}{rgb}{0.0, 0.0, 0.8}
\definecolor{darkred}{rgb}{0.8, 0.0, 0.0}
\definecolor{darkgreen}{rgb}{0.0, 0.8, 0.0}
\newcommand{\supp}{\mathrm{supp}}
\newcommand{\Expect}{{\rm I\kern-.3em E}}
\title{Comparison Results for Gromov-Wasserstein and Gromov-Monge Distances}
\author[1]{Facundo M\'emoli}
\author[2]{Tom Needham}
\affil[1]{Department of Mathematics,
	The Ohio State University\\ 
	\texttt{facundo.memoli@gmail.com}}
\affil[2]{Department of Mathematics,
	Florida State University\\
	\texttt{tneedham@fsu.edu}}
\date{}
\pgfplotsset{compat=1.14}
\let\oldequation\equation
\let\oldendequation\endequation
\renewenvironment{equation}
  {\linenomathNonumbers\oldequation}
  {\oldendequation\endlinenomath}
\begin{document}
\maketitle
\sloppy

\begin{abstract}

Inspired by the Kantorovich formulation of optimal transport distance between probability measures on a metric space, Gromov-Wasserstein (GW) distances comprise a family of  metrics on the space of isomorphism classes of metric measure spaces. In previous work, the authors introduced a variant of this construction which was inspired by the original Monge formulation of optimal transport; elements of the resulting family are referred to Gromov-Monge (GM) distances. These GM distances, and related ideas, have since become a subject of interest from both theoretical and applications-oriented perspectives. In this note, we establish several theoretical properties of GM distances, focusing on comparisons between GM and GW distances. In particular, we show that GM and GW distances are equal for non-atomic metric measure spaces. We also consider variants of GM distance, such as a Monge version of Sturm's $L_p$-transportion distance, and give precise comparisons to GW distance. Finally, we establish bi-H\"{o}lder equivalence between GM distance and an isometry-invariant Monge optimal transport distance between Euclidean metric measure spaces that has been utilized in shape and image analysis applications.

\end{abstract}

\section{Introduction}

Gromov-Wasserstein (GW) distance is a metric which allows one to compare probability measures defined over \emph{different} metric spaces. This is a task which is necessary in many applications in shape analysis and machine learning, and GW distance has consequently become popular in these fields; see \cite{peyre2016gromov,alvarez2018gromov,bunne2019learning,xu2019gromov,chowdhury2021quantized,demetci2022scot}, among many others. A primary reason for the usefulness of this framework is that computation of GW distance involves finding a soft correspondence between points of the two metric spaces being compared (more precisely, a probability coupling; see below), which gives a meaningful registration of the spaces for downstream analysis. On the other hand, it is sometimes desirable to require an exact point registration (i.e., a function) between the spaces, and this led us to define a variant of GW distance, called Gromov-Monge (GM) distance, in our previous paper \cite{memoli2022distance}. Roughly, GM defines an optimization problem with the same objective as GW, but restricts the feasible set from soft correspondences to exact correspondences, so that GW $\leq$ GM, in general. The GM distance, and related ideas, have since appeared in both theoretical and applied contexts \cite{vayer2020contribution, salmona2022gromov,dumont2024existence,hur2024reversible,zhang2022cycle} (see Section \ref{sec:literature} for a detailed literature review). The purpose of the present paper is to collect some results on GM distance which have not appeared elsewhere\footnote{Some of these results appeared in early versions of the arXiv version of our paper \cite{memoli2022distance}, but were removed from the published version for the sake of brevity.}. The overarching theme is to address the following \textbf{main question}: what are conditions on the metric spaces/measures being compared which imply that their GW and GM distances agree?

In order to describe our results, we now introduce some precise definitions. A \emph{metric measure space (mm-space)} is a triple $\mX = (X,d_X,\mu_X)$ such that $(X,d_X)$ is a metric space and $\mu_X$ is a fully supported Borel probability measure on $X$. For $p \in [1,\infty)$, the \emph{Gromov-Wasserstein $p$-distance} between two such structures $\mX$ and $\mY$ is given by
\[
\mathrm{GW}_p(\mX,\mY) \coloneqq \inf_{\pi}  \left(\iint_{(X \times Y)^2} |d_X(x,x') - d_Y(y,y')|^p \pi (dx \times dy) \pi(dx' \times dy')\right)^\frac{1}{p},
\]
where the infimum is over \emph{couplings} of $\mu_X$ and $\mu_Y$; that is, joint probability measures $\pi$ on $X \times Y$ whose marginals are $\mu_X$ and $\mu_Y$. This defines a metric on the space of mm-spaces up to a suitable notion of isomorphism, and extends in a natural way to $p=\infty$. As it is defined as an optimization problem over the space of couplings, GW distance is reminiscent of Kantorovich's formulation of optimal transport, which gives rise to Wasserstein $p$-distances (see \cite{villani2008optimal,peyre2019computational} for general references on optimal transport theory). If we instead extend Monge's original formulation of optimal transport, we are led to the notion of \emph{Gromov-Monge $p$-distance},
\[
\mathrm{GM}_p(\mX,\mY) \coloneqq \inf_{\phi}  \left(\iint_{X \times X} |d_X(x,x') - d_Y(\phi(x),\phi(x'))|^p \mu_X(dx) \mu_X(dx')\right)^\frac{1}{p},
\]
where the infimum is now over functions $\phi:X \to Y$ which push the measure $\mu_X$ forward to $\mu_Y$. It is straightforward to show that $\mathrm{GW}_p(\mX,\mY) \leq \mathrm{GM}_p(\mX,\mY)$, in general (Remark \ref{rmk:dgw_dgm_inequality}). 

The results of this paper explore connections between GW and GM distances (and related constructions) under various assumptions on the spaces being compared. In fact, we work in the more general setting of \emph{measure networks}; that is, triples $\mX = (X,\omega_X,\mu_X)$ where the \emph{network function} $\omega_X:X \times X \to \R$ is an arbitrary measurable map (Definition \ref{def:measure_network}). Our main contributions are:
\begin{itemize}
\item We show that $\dgm(\mX,\mY) = \dgw(\mX,\mY)$ for measure networks with non-atomic probability measures (Theorem \ref{thm:nonatomic_equivalence}). This generalizes a result in the recent paper \cite{hur2021reversible} and is analogous to results which show the equivalence of Kantorovich and Monge (classical) optimal transport for non-atomic measures \cite{ambrosio2003lecture,pratelli2007equality}. This is also related to a growing body of work which seeks to characterize classes of measure networks where $\dgw$ is realized by a Monge map \cite{sturm2023space, vayer2020contribution, salmona2022gromov}; in particular, recent work of Dumont, Lacombe and Vialard gives a complete solution for certain measure networks over Euclidean spaces \cite{dumont2024existence}. This line of work is inspired by a famous result of Brenier for classical optimal transport \cite{brenier1991polar}. 
\item For measure networks over finite, uniformly distributed probability spaces with symmetric positive definite network functions, we show $\mathrm{GM}_2 = \mathrm{GW}_2$ (Proposition \ref{prop:positive_definite}); in particular, $\mathrm{GW}_2$ is always realized by a Monge map in this setting. This has applications to, e.g., graph analysis, where $\omega$ can be the heat kernel. Our result is in the spirit of a result in \cite{vayer2020contribution}, which applies to conditionally negative definite network functions.
\item In the mm-space setting, we show that $\dgw$ is equal to a variant of $\dgm$ which allows for a certain notion of \emph{mass splitting} (Theorem \ref{thm:monge_wasserstein_comparison}). This refines various observations about GW distance that have appeared previously \cite{sturm2023space,chowdhury2019gromov,chowdhury2022distances}.
\item We define an alternate version of GM distance for mm-spaces based on embeddings into a common ambient space, in analogy with Sturm's $L_p$-transportation distance~\cite{sturm2006geometry}, and show that the embedding GM distance and Sturm's distance coincide for non-atomic mm-spaces (Theorem \ref{thm:gm_equals_gw_embedding}). Moreover, we show that the GM (as defined above) gives a lower bound on the embedding version, in general (Theorem \ref{thm:gromov_monge_reformulation}). 
\item Previous works in shape and image analysis consider isometry-invariant versions of Monge optimal transport \cite{haker2004optimal,boyer2011algorithms,al2013continuous}. We show that isometry-invariant Monge distance for Euclidean mm-spaces is bi-H\"{o}lder equivalent to the embedding formulation of GM distance (Theorem \ref{thm:gromov_monge_Euclidean}).
\end{itemize}

The structure of the paper is as follows. In Section \ref{sec:gromov_monge_distances}, we give the necessary background definitions and survey recent literature. Section \ref{sec:gw_as_gm} presents results on equivalence of GW and GM distances for certain classes of measure networks. Finally, in Section \ref{sec:gromovization},  we introduce the embedding formulation of GM distance, give comparison results with the corresponding GW version and show bi-H\"{o}lder equivalence with isometry-invariant Monge distances for Euclidean spaces.

\section{Gromov-Wasserstein and Gromov-Monge Distances}\label{sec:gromov_monge_distances}

This section introduces our basic notation and gives precise definitions of various optimal transport-based distances.

\subsection{Measure Networks and Metric Measure Spaces}

As was described above, the original work in Gromov-Wasserstein (GW) distances \cite{memoli2007use} concerned comparison of mm-spaces, but the purview of the theory has since been extended \cite{chowdhury2019gromovNetworks, titouan2020co, chowdhury2023hypergraph, chen2022weisfeiler}. In this paper, we will work in the following setting, borrowing terminology from \cite{chowdhury2019gromovNetworks}.

\begin{definition}\label{def:measure_network}
A \define{measure network} (\define{m-net}) is a triple $\mathcal{X} = (X,\omega_X,\mu_X)$ such that $X$ is a separable and completely metrizable topological space (or \define{Polish space}), $\mu_X$ is a fully-supported Borel probability measure and $\omega_X:X \times X \to \R$ is a measurable function, which we refer to as the \define{network function}. We use $\mnets$ to denote the class of measure networks. 

In the case that the network function is a metric on $X$, we will typically denote it as $d_X$, we will assume that $d_X$ generates the topology of $X$ and we will refer to the triple $\mathcal{X} = (X,d_X,\mu_X)$ as a \define{metric measure space (mm-space)}. We denote the class of mm-spaces by $\mmspaces$.
\end{definition}

\begin{examples}\label{ex:mm-spaces}
There are many useful examples of m-nets which are not mm-spaces. Here are a few:
\begin{itemize}
\item It is frequently useful to consider network functions which satisfy a subset of the metric axioms. For instance, a \define{pseudo-metric} is a function $\omega_X:X \times X \to \R$ satisfying all of the metric axioms, besides allowing $\omega_X(x,x') = 0 $ for $x \neq x'$. Pseudo-metric measure spaces arise naturally as elements of the completion of the space of metric measure spaces with respect to Gromov-Wasserstein distances \cite{sturm2023space}.
\item A graph $G = (V,E)$ with (finite) vertex set $V$ and edge set $E$ can be represented as an m-net by setting $X = V$, taking $\mu_X$ to be uniform and defining $\omega(x,x') \in \{0,1\}$ to be $1$ if and only if $\{x,x'\} \in E$  \cite{xu2019scalable}.
\item The previous example can be generalized by taking $\omega$ to be some other graph kernel, such as the graph Laplacian or a heat kernel \cite{chowdhury2021generalized}. 
\end{itemize}
\end{examples}

There are two notions of equivalence of measure networks which are relevant to the GW framework. We recall that a measurable function $\phi:X \to Y$ between measure spaces $(X,\mu_X)$ and $(Y,\mu_Y)$ is \define{measure-preserving} if $\phi_\# \mu_X = \mu_Y$, where $\phi_\# \mu_X (A) \coloneqq \mu_X(\phi^{-1}(A))$, for any measurable $A \subset Y$. 

\begin{definition}
Let $\mX$ and $\mY$ be m-nets. A \define{strong isomorphism} from $\mX$ to $\mY$ is a measure-preserving bijective map $\phi:X \to Y$ with measure-preserving inverse, such that $\omega_Y(\phi(x),\phi(x')) = \omega_X(x,x')$ for all $(x,x') \in X \times X$. 
\end{definition}

\begin{definition}\label{def:weak_isomorphism}
Measure networks $\mX$ and $\mY$ are \define{weakly isomorphic} if there exists a Borel probability space $(Z,\mu_Z)$, together with measure-preserving maps $\phi_X:Z \to X$ and $\phi_Y:Z \to Y$ such that $\omega_X(\phi_X(z),\phi_X(z')) = \omega_Y(\phi_Y(z),\phi_Y(z'))$ holds for $\mu_Z \otimes \mu_Z$-almost every $(z,z') \in Z \times Z$. 
\end{definition}

Trivially, if $\mX$ and $\mY$ are strongly isomorphic then they are weakly isomorphic. The converse holds if $\mX$ and $\mY$ are mm-spaces, but not in general (see Theorem \ref{thm:summary} below).

\subsection{Distances Between Measures and Measure Networks}

Optimal transport distances are defined in terms of certain joint probability measures called couplings, which we now recall.

\begin{definition}\label{def:couplings}
Let $(X,\mu_X)$ and $(Y,\mu_Y)$ be probability spaces. A \define{coupling} of $\mu_X$ and $\mu_Y$ is a probability measure $\pi$ on $X \times Y$ whose marginals are $\mu_X$ and $\mu_Y$, respectively. That is, let $\rho_X:X \times Y \to X$ and $\rho_Y:X \times Y \to Y$ be coordinate projections; then a coupling satisfies $(\rho_X)_\# \pi = \mu_X$ and $(\rho_Y)_\# \pi = \mu_Y$. The collection of all couplings of $\mu_X$ and $\mu_Y$ will be denoted $\mathcal{C}(\mu_X,\mu_Y)$. 

For a measure-preserving map $\phi:X \to Y$, let $\mathrm{id}_X \times \phi: X \to X \times Y$ be defined by $(\mathrm{id}_X \times \phi) (x,x') = (x,\phi(x'))$. We define the \define{coupling induced by $\phi$} to be
$\pi_\phi \coloneqq (\mathrm{id}_X \times \phi)_\# \mu_X \in \mathcal{C}(\mu_X,\mu_Y)$. Let $\mathcal{T}(\mu_X,\mu_Y)$ denote the collection of all measure-preserving mappings from $(X,\mu_X)$ to $(Y,\mu_Y)$ and let 
\[
\mathcal{T}_\#(\mu_X,\mu_Y) \coloneqq \{\pi_\phi \mid \phi \in \mathcal{T}(\mu_X,\mu_Y)\} \subset \mathcal{C}(\mu_X,\mu_Y)
\]
denote the set of couplings induced by measure-preserving mappings.
\end{definition}

\begin{remark}
The set $\mathcal{C}(\mu_X,\mu_Y)$ is always nonempty, since it contains the product measure $\mu_X \otimes \mu_Y$. However, the set $\mathcal{T}(\mu_X,\mu_Y)$ (or $\mathcal{T}_\#(\mu_X,\mu_Y)$) can be empty. For example, if $\mu_X$ is a Dirac measure on a single point and $\mu_Y$ is a sum of Dirac measures $\frac{1}{2}(\delta_{y} + \delta_{y'})$ for $y \neq y'$, then there is no measure-preserving map $\phi:X \to Y$. 
\end{remark}

With the notion of coupling in hand, we can now define some classical optimal transport distances between measures over the same metric space. We do so in the setting of metric spaces, since this level of generality will be sufficient for our purposes. For background on classical optimal transport, see, e.g.,~\cite{villani2008optimal}. In the following, we allow our distances to take the value $\infty$; that is, they may be \define{extended metrics}.

\begin{definition}
Let $(X,d_X)$ be a metric space and let $\mu$ and $\nu$ be Borel probability measures on $X$. The \define{Wasserstein $p$-distance} is defined for $p \in [1,\infty)$ by
\[
\mathrm{W}_p(\mu,\nu)^p \coloneqq \inf_{\pi \in \mathcal{C}(\mu,\nu)} \int_{X \times X} d_X(x,x')^p \pi(dx \times dx') = \inf_{\pi \in \mathcal{C}(\mu,\nu)} \|d_X\|_{L^p(\pi)}^p
\]
and for $p = \infty$ by 
\[
\mathrm{W}_\infty(\mu,\nu) \coloneqq \inf_{\pi \in \mathcal{C}(\mu,\nu)} \sup \{d_X(x,x') \mid (x,x') \in \mathrm{supp}(\pi)\} = \inf_{\pi \in \mathcal{C}(\mu,\nu)} \|d_X\|_{L^\infty(\mathrm{supp}(\pi))},
\]
where $\mathrm{supp}(\pi)$ is the support of $\pi$.

We likewise define the \define{Monge $p$-distance} by restricting the feasible set of couplings; that is,
\[
\mathrm{M}_p(\mu,\nu) \coloneqq \left\{\begin{array}{cl}
\inf_{\pi \in \mathcal{T}_\#(\mu,\nu)} \|d_X\|_{L^p(\pi)} & p < \infty \\
\inf_{\pi \in \mathcal{T}_\#(\mu,\nu)} \|d_X\|_{L^\infty(\mathrm{supp}(\pi))} & p = \infty,
\end{array}\right.
\]
where we take the infimum over the empty set to be infinity. 

When the underlying metric space needs to emphasized, we will write $\dw^X = \dw$ and $\dm^X = \dm$.
\end{definition}

These distances have analogues which measure the distance between pairs of mm-spaces or measure networks. Since the underlying spaces are allowed to vary, we need to measure the quality of a coupling in a relative manner, leading to the following definition.

\begin{definition}\label{def:distortion}
Let $\mX$ and $\mY$ be measure networks and $p \in [1,\infty)$. The \define{$p$-distortion} of a coupling $\pi \in \mathcal{C}(\mu_X,\mu_Y)$ is 
\begin{align*}
\mathrm{dis}_p(\pi)^p = \mathrm{dis}_{p,\mX,\mY}(\pi)^p &\coloneqq  \iint_{(X \times Y)^2} \left| \omega_X(x,x') - \omega_Y(y,y') \right|^p \pi(dx \times dy) \pi(dx' \times dy') \\
&= \|\omega_X - \omega_Y\|_{L^p(\pi \otimes \pi)}^p.
\end{align*}
Similarly, for $p = \infty$, we define
\[
\mathrm{dis}_\infty(\pi) = \mathrm{dis}_{\infty,\mX,\mY}(\pi) \coloneqq \sup \{ \left| \omega_X(x,x') - \omega_Y(y,y') \right| \mid (x,y),(x',y') \in \mathrm{supp}(\pi)\}.
\]

We define the \define{$p$-distortion} of a measure-preserving map $\phi \in \mathcal{T}(\mu_X,\mu_Y)$ to be
$
\mathrm{dis}_p(\phi) = \mathrm{dis}_p(\pi_\phi),
$ 
with $\pi_\phi$ as in Definition \ref{def:couplings}. The expression for the distortion simplifies in the $p < \infty$ case as
\[
\mathrm{dis}_p(\phi)^p = \iint_{X \times X} \left| \omega_X(x,x') - \omega_Y(\phi(x),\phi(x')) \right|^p \mu_X(dx) \mu_X(dx'),
\]
and similarly in the $p=\infty$ case.
\end{definition}

Finally, we define our distances between measure networks. These agree in the mm-space setting with the definitions given in the introduction.

\begin{definition}
For $p \in [1,\infty]$, the \define{Gromov-Wasserstein $p$-distance} is the function $\dgw: \mnets \times \mnets \to \R\cup \{\infty\}$ defined by
\[
\dgw(\mX,\mY) \coloneqq \inf_{\pi \in \mathcal{C}(\mu_X,\mu_Y)} \mathrm{dis}_p(\pi).
\]
The \define{Gromov-Monge $p$-distance} is the function $\dgm: \mnets \times \mnets \to \R \cup \{\infty\}$ defined by
\[
\dgm(\mX,\mY) \coloneqq \inf_{\phi \in \mathcal{T}(\mu_X,\mu_Y)} \mathrm{dis}_p(\phi) = \inf_{\pi_\phi \in \mathcal{T}_\#(\mu_X,\mu_Y)} \mathrm{dis}_p(\pi_\phi),
\]
where we once again declare the infimum over the empty set to be infinity. 
\end{definition}

\begin{remark}\label{rmk:dgw_dgm_inequality}
Since $\dgw$ involves optimization over a larger set of couplings than $\dgm$, the inequality $\dgw(\mX,\mY) \leq \dgm(\mX,\mY)$ always holds.
\end{remark}

\begin{remark}\label{rmk:optimal_couplings}
The infimum in the definition of $\dgw$ is actually a minimum. This is proved in~\cite[Theorem 2.2]{chowdhury2019gromovNetworks} for m-nets whose network functions are bounded and whose measures are fully supported, and in~\cite[Lemma 1.7]{sturm2023space} for m-nets whose network functions are metrics, i.e., for mm-spaces (without any boundedness or support assumptions). Working through the proofs, one sees that the various assumptions are not necessary, and the result goes through in the general setting of Definition \ref{def:measure_network}.
\end{remark}

In this paper, we consider m-nets with potentially unbounded network functions, meaning that $\dgw$ is not guaranteed, in general, to be finite. It is sometimes useful to restrict to subspaces of m-nets with additional control on the network functions. It is straightforward to see that $\dgw$ is finite when restricted to $\mnets_p \times \mnets_p$, where $\mnets_p$ is as defined below.

\begin{definition}
    For $p \in [1,\infty)$ and an m-net $\mX$, define the \define{$p$-size} of $\mX$, $\mathrm{size}_p(\mX) \in \R \cup \{\infty\}$, by
    \[
    \mathrm{size}_p(\mX)^p \coloneqq \iint_{X \times X} |\omega_X(x,x')|^p \mu_X(dx) \mu_X(dx').
    \]
    Similarly, let 
    \[
    \mathrm{size}_\infty(\mX) \coloneqq \sup_{x,x' \in X} |\omega_X(x,x')|.
    \]
    We denote the class of m-nets (respectively, mm-spaces) $\mX$ with $\mathrm{size}_p(\mX) < \infty$ by $\mnets_p$ (respectively, $\mmspaces_p$).
\end{definition}

\begin{remark}\label{rem:p-size}
    The terminology above is used in, e.g., \cite{sturm2023space}. In the setting of mm-spaces, $\mathrm{size}_p(\mX)$ is sometimes referred to as the \define{$p$-diameter} of $\mX$. It appears in certain estimates of GW distance---see~\cite[Theorem 5.1]{memoli2011gromov}. In particular, letting $\ast$ denote the 1-point mm-space,
    \[
    \dgw(\mX,\ast)  = \mathrm{size}_p(\mX) = \dgm(\mX,\ast),
    \]
    where the first equality appears in~\cite[Theorem 5.1 (f)]{memoli2011gromov} and the second follows because the unique coupling of $\mX$ and $\ast$ is induced by a measure-preserving map.
\end{remark}

The following theorem summarizes some basic results on metric properties of GM and GW distances in the literature. We recall some terminology from metric geometry. For a set $Z$, a function $d: Z \times Z \to \R \cup \{\infty\}$ is a \define{Lawvere metric} if it satisfies the triangle inequality and $d(z,z) = 0$ for all $z \in Z$---this terminology refers to classic work of Lawvere, which characterizes such structures in the language of enriched category theory~\cite{lawvere1973metric}. A Lawvere metric is a \define{pseudometric} if it is, in addition, finite-valued and symmetric (i.e., it satisfies the axioms of a metric, except it is possible that $d(z,z') = 0$ for $z \neq z'$), as in Example \ref{ex:mm-spaces}. 

\begin{theorem}\label{thm:summary}
Let $p \in [1,\infty]$.
\begin{itemize}
\item On the space $\mmspaces_p$, the Gromov-Wasserstein $p$-distance defines a pseudometric such that $\dgw(\mX,\mY) = 0$ if and only if $\mX$ and $\mY$ are strongly isomorphic \cite{memoli2007use,sturm2023space}.
\item On the space $\mmspaces$, the Gromov-Monge $p$-distance defines a Lawvere metric such that $\dgm(\mX,\mY) = 0$ if and only if $\mX$ and $\mY$ are strongly isomorphic. It may take the value $\infty$ even on $\mmspaces_p$  \cite{memoli2022distance}.
\item On the space $\mnets_p$, the Gromov-Wasserstein $p$-distance defines a pseudometric such that $\dgw(\mX,\mY) = 0$ if and only if $\mX$ and $\mY$ are weakly isomorphic \cite{chowdhury2019gromovNetworks}.
\end{itemize}
\end{theorem}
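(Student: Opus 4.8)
Since this theorem collects results established in \cite{memoli2007use,sturm2023space,chowdhury2019gromovNetworks,memoli2022distance}, the plan is to indicate how each clause is obtained rather than reproduce every computation. I would organize the argument around two tasks: verifying the metric-type axioms, and characterizing when the distances vanish. For both $\dgw$ and $\dgm$, nonnegativity is immediate from the definition as an infimum of $L^p$-type quantities, and vanishing on the diagonal follows by taking the coupling induced by the identity map (equivalently, the diagonal coupling), which has zero distortion. Symmetry of $\dgw$ is obtained by pushing a coupling $\pi \in \mathcal{C}(\mu_X,\mu_Y)$ through the coordinate swap $X \times Y \to Y \times X$, which preserves distortion. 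The one substantive axiom is the triangle inequality. For $\dgw$ I would use the standard gluing construction: given couplings $\pi_{12} \in \mathcal{C}(\mu_1,\mu_2)$ and $\pi_{23} \in \mathcal{C}(\mu_2,\mu_3)$, disintegrate both over the common marginal $\mu_2$ and glue to form a measure $\mu$ on $X_1 \times X_2 \times X_3$ whose $(1,3)$-marginal $\pi_{13}$ couples $\mu_1$ and $\mu_3$; the pointwise estimate $|\omega_1 - \omega_3| \le |\omega_1 - \omega_2| + |\omega_2 - \omega_3|$ together with Minkowski's inequality in $L^p(\mu \otimes \mu)$ yields $\mathrm{dis}_p(\pi_{13}) \le \mathrm{dis}_p(\pi_{12}) + \mathrm{dis}_p(\pi_{23})$, and taking infima gives the triangle inequality (the $p=\infty$ case is identical with suprema).

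For $\dgm$ the role of gluing is played by composition: if $\phi \in \mathcal{T}(\mu_1,\mu_2)$ and $\psi \in \mathcal{T}(\mu_2,\mu_3)$ then $\psi \circ \phi \in \mathcal{T}(\mu_1,\mu_3)$, and inserting $\omega_2(\phi(x),\phi(x'))$ as an intermediate term and using that $\phi_\#\mu_1 = \mu_2$ gives $\mathrm{dis}_p(\psi \circ \phi) \le \mathrm{dis}_p(\phi) + \mathrm{dis}_p(\psi)$ by Minkowski; if either feasible set is empty the right-hand side is $\infty$ and the inequality is trivial. This shows $\dgm$ satisfies the triangle inequality and vanishes on the diagonal. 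However, $\dgm$ is only a Lawvere metric: composition is directional, so no symmetry argument is available, and since $\mathcal{T}(\mu_X,\mu_Y)$ may be empty even when the $p$-size is finite (e.g.\ when $\mu_X$ is a single atom and $\mu_Y$ has two atoms of equal mass, as in the remark following Definition \ref{def:couplings}, since the pushforward of a point mass is a point mass), $\dgm$ can equal $\infty$ on $\mmspaces_p$. I would record such an explicit example to justify the last sentence of the second clause.

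The zero-distance characterizations are where the real content lies. For $\dgw$, I would use attainment of the infimum (Remark \ref{rmk:optimal_couplings}): if $\dgw(\mX,\mY)=0$ then there is an optimal $\pi$ with $\mathrm{dis}_p(\pi)=0$, i.e.\ $\omega_X(x,x') = \omega_Y(y,y')$ for $(\pi\otimes\pi)$-almost every pair; taking $(Z,\mu_Z) = (X\times Y,\pi)$ with the coordinate projections exhibits a weak isomorphism, and the converse is immediate by pushing $\mu_Z$ forward to a zero-distortion coupling. This settles the m-net clause. For mm-spaces one must upgrade weak isomorphism to strong isomorphism, and this is the crux of the whole theorem: it is precisely Gromov's reconstruction theorem (an mm-space is determined up to measure-preserving isometry by the distribution of its distance matrix), which I expect to be the main obstacle and which I would invoke from \cite{memoli2007use,sturm2023space}. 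Finally, for $\dgm$ on mm-spaces, the direction $\dgm(\mX,\mY)=0 \Rightarrow$ strong isomorphism follows by combining $\dgw \le \dgm$ (so that $\dgw = 0$, hence $\mX$ and $\mY$ are weakly and therefore strongly isomorphic) with the reconstruction theorem, while the reverse direction is immediate since a strong isomorphism is a bijective measure-preserving map of zero distortion, forcing $\dgm(\mX,\mY) = 0$.
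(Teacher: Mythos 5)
Your proposal is correct in outline, but it takes a genuinely different route from the paper, because the paper does not prove Theorem \ref{thm:summary} at all: it is a summary statement whose ``proof'' consists of the citations in each clause together with the remark immediately following it, which explains that the cited results are stated in narrower settings (compact mm-spaces in \cite{memoli2007use} and \cite{memoli2022distance}, $\mnets_\infty$ in \cite{chowdhury2019gromovNetworks}, $\mmspaces_p$ in \cite{sturm2023space}) and that their proofs extend without change to $\mmspaces_p$, $\mnets_p$ and $\mmspaces$ as claimed. Your sketch instead reconstructs those proofs directly at the stated level of generality---gluing for the $\dgw$ triangle inequality, composition of measure-preserving maps for the $\dgm$ triangle inequality, attainment of optimal couplings plus the coordinate-projection construction for the weak-isomorphism characterization, and Gromov's reconstruction theorem (cited, not proved) to upgrade weak to strong isomorphism for mm-spaces---which is exactly how the cited literature argues, so the two treatments are consistent; yours buys self-containedness and implicitly verifies the extension-of-generality point that the paper's remark only asserts, while the paper's buys brevity. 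Two small points to tighten. First, the paper's notion of pseudometric requires finite values, so for the clauses on $\mmspaces_p$ and $\mnets_p$ you should note that $\dgw$ is finite there: take the product coupling and apply Minkowski to get $\mathrm{dis}_p(\mu_X \otimes \mu_Y) \leq \mathrm{size}_p(\mX) + \mathrm{size}_p(\mY) < \infty$. Second, your gluing step silently uses disintegration of measures, which is legitimate precisely because m-nets are defined over Polish spaces; this is worth stating explicitly, since the behavior of the underlying spaces is the one place where ``the proofs extend'' actually needs checking.
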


\begin{remark}
    Let us comment on some subtleties of the results referenced in Theorem \ref{thm:summary}. When $\dgw$ was first introduced, it was shown to define a metric up to strong isomorphism on the space of \emph{compact} mm-spaces~\cite[Proposition 6]{memoli2007use}. This was later extended to $\mmspaces_p$ in \cite[Proposition 1.12]{sturm2023space}; the proof is essentially the same as in the compact case. The case of $\dgw$ for m-nets was formalized in \cite{chowdhury2019gromovNetworks}, where it was shown to define a metric on $\mnets_\infty$ (m-nets with bounded network functions), considered up to weak isomorphism; going through the proof there, one sees that it extends immediately to $\mnets_p$. Similarly, \cite[Theorem 3]{memoli2022distance} shows that $\dgm$ defines a Lawvere metric on the space of strong isomorphism classes of \emph{compact} mm-spaces, but the proof extends to all of $\mmspaces$ without change.
\end{remark}

The metric properties for $\dgm$ on the space of measure networks are more subtle.

\begin{proposition}\label{prop:dgm_networks}
On the space $\mnets$, $\dgm$ defines a Lawvere metric such that if $\dgm(\mX,\mY) = 0$ then $\mX$ and $\mY$ are weakly isomorphic and if $\mX$ and $\mY$ are strongly isomorphic, then $\dgm(\mX,\mY) = 0$. However, it is possible for weakly isomorphic $\mX$ and $\mY$ to have $\dgm(\mX,\mY) > 0$. 
\end{proposition}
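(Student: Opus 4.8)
The plan is to verify the Lawvere-metric axioms, establish the two one-directional implications relating $\dgm = 0$ to the isomorphism notions, and finally exhibit a counterexample showing that weak isomorphism does not force $\dgm$ to vanish.

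For the Lawvere axioms, reflexivity is immediate since the identity map $\mathrm{id}_X \in \mathcal{T}(\mu_X,\mu_X)$ has zero distortion, giving $\dgm(\mX,\mX)=0$. The triangle inequality rests on a composition estimate: given $\phi \in \mathcal{T}(\mu_X,\mu_Y)$ and $\psi \in \mathcal{T}(\mu_Y,\mu_Z)$, the composite $\psi\circ\phi$ lies in $\mathcal{T}(\mu_X,\mu_Z)$, and I would prove
\[
\mathrm{dis}_p(\psi\circ\phi) \le \mathrm{dis}_p(\phi) + \mathrm{dis}_p(\psi)
\]
by applying the $L^p$ triangle inequality to $\omega_X - \omega_Z\circ(\psi\phi\times\psi\phi)$, split at the intermediate term $\omega_Y\circ(\phi\times\phi)$, and then using that $\phi\times\phi$ is measure-preserving from $(X\times X,\mu_X\otimes\mu_X)$ to $(Y\times Y,\mu_Y\otimes\mu_Y)$ to rewrite the second summand as exactly $\mathrm{dis}_p(\psi)$ (the $p=\infty$ case being identical with suprema over supports). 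Passing to near-optimal $\phi,\psi$ and sending the slack to zero yields $\dgm(\mX,\mZ)\le\dgm(\mX,\mY)+\dgm(\mY,\mZ)$. This is the argument used for mm-spaces in \cite{memoli2022distance}; since no metric axiom on $\omega$ is invoked, it transfers verbatim to $\mnets$.

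For the implications, if $\mX$ and $\mY$ are strongly isomorphic via $\phi$, then $\phi\in\mathcal{T}(\mu_X,\mu_Y)$ satisfies $\omega_Y(\phi(x),\phi(x'))=\omega_X(x,x')$ identically, so $\mathrm{dis}_p(\phi)=0$ and $\dgm(\mX,\mY)=0$. The implication $\dgm(\mX,\mY)=0 \Rightarrow$ weak isomorphism is the more delicate one, and the key idea is to route through $\dgw$ rather than attempt to extract a limit of the Monge maps (the infimum defining $\dgm$ need not be attained). Since $\dgw\le\dgm$ by Remark \ref{rmk:dgw_dgm_inequality}, we get $\dgw(\mX,\mY)=0$, and by Remark \ref{rmk:optimal_couplings} this infimum is attained by some $\pi\in\mathcal{C}(\mu_X,\mu_Y)$ with $\mathrm{dis}_p(\pi)=0$; that is, $\omega_X(x,x')=\omega_Y(y,y')$ for $(\pi\otimes\pi)$-almost every $((x,y),(x',y'))$. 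Taking $(Z,\mu_Z)=(X\times Y,\pi)$ with the coordinate projections $\rho_X,\rho_Y$ as structure maps then directly witnesses the weak isomorphism, since the projections are measure-preserving (as $\pi$ has the correct marginals) and the required distortion identity is precisely the almost-everywhere equality just obtained.

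Finally, for the counterexample I would take $\mX=(\{*\},\omega_X,\delta_*)$ with $\omega_X\equiv 0$ and $\mY=(\{a,b\},\omega_Y,\tfrac12(\delta_a+\delta_b))$ with $\omega_Y\equiv 0$. These are weakly isomorphic via $Z=\{a,b\}$ with $\mu_Z=\tfrac12(\delta_a+\delta_b)$, the constant map $Z\to X$, and the identity $Z\to Y$ (both measure-preserving, and the vanishing network functions trivially agree). However, any map $\phi:\{*\}\to\{a,b\}$ pushes $\delta_*$ to a Dirac measure, which can never equal $\tfrac12(\delta_a+\delta_b)$, so $\mathcal{T}(\mu_X,\mu_Y)=\emptyset$ and hence $\dgm(\mX,\mY)=\infty>0$; this is exactly the phenomenon flagged in the remark after Definition \ref{def:couplings}, where weak isomorphism lets the intermediate space split the atom of $\mu_X$ while a Monge map cannot. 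I expect this counterexample to be the conceptual crux: not because it is computationally hard, but because it isolates why $\dgm$ fails to obstruct weak isomorphism on $\mnets$, namely the asymmetry and mass-splitting rigidity of measure-preserving maps. The remaining obstacle is only the bookkeeping needed to run the composition estimate and the optimal-coupling argument uniformly in the $p=\infty$ case.
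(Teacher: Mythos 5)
Your proof is correct and takes essentially the same route as the paper's: reflexivity plus the composition-based triangle inequality carried over from the mm-space case of \cite{memoli2022distance}, deducing weak isomorphism from $\dgm(\mX,\mY)=0$ by passing through $\dgw \leq \dgm$, noting that a strong isomorphism is a zero-distortion Monge map, and the identical one-point versus two-point counterexample with vanishing network functions and $\mathcal{T}(\mu_X,\mu_Y)=\emptyset$. The only difference is that where the paper invokes the cited equivalence between $\dgw = 0$ and weak isomorphism (Theorem \ref{thm:summary}), you unwind it directly via an optimal coupling (Remark \ref{rmk:optimal_couplings}) used as the intermediate space---a harmless, and in fact slightly more self-contained, elaboration that avoids any finiteness-of-size assumption.
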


\begin{proof}
That $\dgm(\mX,\mX) = 0$ is obvious and the proof of the triangle inequality follows as in the mm-space case~\cite{memoli2022distance}, hence $\dgm$ is a Lawvere metric. By Remark \ref{rmk:dgw_dgm_inequality}, $\dgm(\mX,\mY) = 0$ implies $\dgw(\mX,\mY) = 0$, hence $\mX$ and $\mY$ are weakly isomorphic. It is also easy to show that if $\mX$ and $\mY$ are strongly isomorphic, then $\dgm(\mX,\mY) = 0$.

It remains to show that weak isomorphism of $\mX$ and $\mY$ does not necessarily imply $\dgm(\mX,\mY) =0$. This can be done by example: take $X = \{x\}$ with uniform measure and $\omega_X$ identically zero, and $Y = \{y,y'\}$ with uniform measure and $\omega_Y$ identically zero. It follows by a (trivial)  computation that  $\dgw(\mX,\mY) = 0$, hence $\mX$ and $\mY$ are weakly isomorphic, but the set of measure preserving maps $\mathcal{T}(\mu_X,\mu_Y)$ is empty, hence $\dgm(\mX,\mY) = \infty$. 
\end{proof}

\begin{example}\label{ex:nonzero_weak_iso}
With slightly more work, one can construct weakly isomorphic spaces $\mX$ and $\mY$ such that $\dgm(\mX,\mY)$ is \emph{finite} but nonzero. Indeed, take
\[
X = \{x,x',x''\}, \quad \mu_X(x) = \frac{1}{2},\, \mu_X(x') = \mu_X(x'') = \frac{1}{4}, \quad \omega_X = \begin{blockarray}{cccc}
x & x' & x'' \\
\begin{block}{(ccc)c}
  1 & 0 & 0 & x \\
  0 & 0 & 0 & x' \\
  0 & 0 & 0 & x'' \\
\end{block}
\end{blockarray}
\]
and 
\[
Y = \{y,y',y''\}, \quad \mu_Y(y) = \mu_Y(y')= \frac{1}{4},\, \mu_Y(y'') = \frac{1}{2}, \quad \omega_Y = \begin{blockarray}{cccc}
y & y' & y'' \\
\begin{block}{(ccc)c}
  1 & 1 & 0 & y \\
  1 & 1 & 0 & y' \\
  0 & 0 & 0 & y'' \\
\end{block}
\end{blockarray}.
\]
Then one can show that $X$ and $Y$ are weakly isomorphic, but both of the two possible preserving maps $\phi:X \to Y$ have positive distortion.
\end{example}

In Section \ref{sec:gw_as_gm}, we give more precise comparisons between Gromov-Wasserstein and Gromov-Monge distances under additional assumptions on the measure networks. 

\subsection{Related Work}\label{sec:literature}

We defined Gromov-Monge distances between mm-spaces in our previous paper \cite{memoli2022distance}. There, the focus was on restrictions to various subcategories of mm-spaces, and GM distances were mainly used as a framing device to motivate certain inverse problems for mm-spaces. Gromov-Monge distances have since arisen in theoretical and applied contexts, and we survey those appearances here. 

Several articles consider the following problem, which is inspired by the well known work of Brenier in the classical optimal transport setting~\cite{brenier1991polar}. 

\begin{problem}[Monge Map Problem for GW]\label{prob:GM}
Given a class of measure networks, determine whether Gromov-Wasserstein distance between any two elements is realized by a Monge map. That is, for a class $\catC$ of measure networks, we would like to show whether or not it is the case that for all $\mX,\mY \in \catC$, there exists $\phi \in \mathcal{T}(\mu_X,\mu_Y)$ such that $\dgw(\mX,\mY) = \mathrm{dis}_p(\phi)$ (for some $p \in [1,\infty]$).
\end{problem}

Finding a class of mm-spaces with an affirmative answer to Problem \ref{prob:GM} was first posed by Sturm as a ``Challenge" in \cite[Challenge 3.6]{sturm2023space}. Moreover, \cite[Challenge 5.27]{sturm2023space}) asks one to solve the problem specifically for the class of finite mm-spaces of fixed cardinality, with uniform measures. The first solution to Problem \ref{prob:GM} appears as \cite[Theorem 9.21]{sturm2023space} for the class of measure networks $\mathcal{X}$ with $X \subset \R^n$, $\omega_X(x,x') = \|x - x'\|^2$ (squared Euclidean distance) and $\mu_X$ absolutely continuous with respect to Lebesgue measure and rotationally invariant. This result, and all other results described in the remainder of this subsection, are valid for $\mathrm{GW}_2$, specifically.

A solution to the Monge map problem for the class measure networks $\mathcal{X}$ with $X$ a finite subset of the real line of some fixed finite cardinality, $\omega_X(x,x') = |x - x'|^2$ and $\mu_X$ uniform was proposed in \cite{vayer2019sliced}, with a view toward a ``sliced" approximation of GW distance (similar to \define{sliced Wasserstein distances} \cite{bonneel2015sliced}). In particular, it was claimed that the optimal Monge map is always order-preserving or order-reversing. However, a counterexample to this stronger claim was recently demonstrated~\cite{beinert2022assignment}. The solution to Problem \ref{prob:GM} nonetheless holds in this setting, and was shown in the thesis of Vayer to also hold for higher-dimensional finite Euclidean mm-spaces \cite[Theorem 4.1.2]{vayer2020contribution}. That is, for the class of mm-spaces $\mathcal{X}$ with $X$ a subset of a Euclidean space (of arbitrary finite dimension) of fixed finite cardinality, $\omega_X(x,x') = \|x-x'\|^2$ and $\mu_X$ uniform, it was shown that GW distance is always realized by a permutation (i.e., a Monge map). 

In \cite{vayer2020contribution}, Vayer also considered Euclidean measure networks whose measures are assumed to be compactly supported and to have density with respect to Lebesgue measure, and whose network functions are squared Euclidean distance---we refer to this as the \define{quadratic Euclidean class}---and Euclidean measure networks whose network functions are (standard) inner products---we refer to this as the \define{inner product Euclidean class}. Vayer proves the existence of Monge maps under additional assumptions (the existence of an optimal coupling with certain properties) in both of these classes \cite[Theorem 4.2.3 and Proposition 4.2.4]{vayer2020contribution}. Recently, general detailed solutions to Problem \ref{prob:GM} were obtained in the quadratic and inner product Euclidean classes by Dumont, Lacombe, and Vialard in \cite[Theorems 3.2 and 3.6]{dumont2024existence}, as consequences of technical theorems on existence of Monge maps for optimal transport costs defined by submersions. Finally, in \cite{salmona2022gromov}, Salmona, Delon and Desolneux study GW distances between measure networks in the quadratic and inner product Euclidean classes whose measures are Gaussians. An explicit formula is derived in the inner product class \cite[Proposition 4.1]{salmona2022gromov} and it is shown that if one restricts the feasible set of couplings to those couplings which are themselves Gaussian, the same solution is optimal in the quadratic class \cite[Theorem 4.1]{salmona2022gromov}.

Gromov-Monge distances, or closely related variants, have recently appeared in the machine learning literature. In \cite{hur2024reversible}, the GM distance is symmetrized by defining a new distance, called \define{Reversible Gromov-Monge (RGM) distance}, which involves optimization over a pair of measure-preserving maps satisfying a certain consistency condition. In particular, one always has $\mathrm{GM}_2(\mX,\mY) \leq \mathrm{RGM}(\mX,\mY)$. Several theoretical properties of RGM distance are derived, and applications to simulation-based inference are described, where the idea is to use optimal measure-preserving maps to design transform samplers. Similarly, a two-map variant of GM distance is introduced in \cite{zhang2022cycle}, where the main distinction from RGM is the inclusion of a term which penalizes the maps according to how far they are from being inverses to one another. A benefit of GM-type distances, which is taken advantage of in both \cite{hur2024reversible} and \cite{zhang2022cycle}, is that the spaces of admissible mappings can be parameterized as neural networks. Learning optimal measure-preserving maps can then take advantage of efficient neural network training algorithms. 

Finally, we note that isometry-invariant versions of Monge distance have appeared in the literature previously in the context of registering images \cite{haker2004optimal} and anatomical surfaces \cite{boyer2011algorithms,al2013continuous}. We show in Section \ref{sec:GM_for_Euclidean_spaces} that these metrics are bi-H\"{o}lder equivalent to an alternative version of GM distance, as defined in Section \ref{sec:em_GM}. More details on these isometry-invariant metrics and their connections to GM distances are provided below in Remark \ref{rmk:more_examples}.

\section{Comparisons Between Gromov-Monge and Gromov-Wasserstein Distances}\label{sec:gw_as_gm}

A natural relaxation of the Monge Map Problem \ref{prob:GM} is: 

\begin{problem}[Equality of GW and GM]\label{prob:gw_equals_gm}
Given a class of measure networks $\catC$, determine whether $\dgw(\mX,\mY) = \dgm(\mX,\mY)$ for all $\mX,\mY \in \catC$. 
\end{problem}

This is indeed a relaxation of Problem \ref{prob:GM}; if there exists a measure-preserving map $\phi:X \to Y$ such that $\dgw(\mX,\mY) = \mathrm{dis}_p(\phi)$, then $\dgw(\mX,\mY) \geq \dgm(\mX,\mY)$ and Remark \ref{rmk:dgw_dgm_inequality} implies that this is actually an equality. This section treats Problem \ref{prob:gw_equals_gm} and other related problems.

\subsection{Non-Atomic Spaces}

We first address the question of equality of Gromov-Wasserstein and Gromov-Monge distances in the setting of non-atomic spaces---recall that a measure is \define{non-atomic} if it assigns zero mass to any singleton.

\begin{theorem}\label{thm:nonatomic_equivalence}
Let $\mX$ and $\mY$ be measure networks in $\mnets_p$ such that $\mu_X$ and $\mu_Y$ are non-atomic. Then, for $p \in [1,\infty)$,
$
\dgm(\mX,\mY) = \dgw(\mX,\mY). 
$
\end{theorem}

This result is analogous to a theorem of Pratelli \cite{pratelli2007equality}, who showed equality of Wasserstein and Monge optimal transport for nonatomic measures when the underlying cost is assumed to be continuous, generalizing a result of Ambrosio \cite[Theorem 2.1]{ambrosio2003lecture} (remarkably, Pratelli's theorem allows for unbounded costs, which can even take the value $\infty$). Our theorem generalizes recent work of Hur, Guo and Liang \cite{hur2021reversible}, which shows equality of $\mathrm{GW}_2$ and $\mathrm{GM}_2$ for non-atomic measure networks with continuous and bounded network functions whose underlying spaces are subsets of $\R^d$ (this follows by combining Theorem 3 and Proposition 1 of \cite{hur2021reversible}). Notably, we make no continuity or boundedness assumption on network functions in Theorem \ref{thm:nonatomic_equivalence}. In~\cite[Theorem 5.5]{hur2024reversible}, from the published version of~\cite{hur2021reversible}, the authors give a refinement of Theorem~\ref{thm:nonatomic_equivalence}, at a similar level of generality, citing a preprint version of the present paper as an inspiration for the proof of their result.

Let $I = [0,1]$ and let $\lambda$ denote Lebesgue measure, restricted to $I$. We will consider the product measure space $(I^2,\lambda^2) = (I \times I, \lambda \otimes \lambda)$. The strategy of the proof is to transform the general problem into the simpler subproblem on the class of measure networks of the form $(I^2,\omega,\lambda^2)$, via a classical isomorphism theorem. Such a strategy was proposed by Gangbo in \cite[Proposition A.3]{gangbo1999monge} to prove equality of Wasserstein and Monge distances in the classical optimal transport setting. However, Pratelli observes in \cite[Section 1.2]{pratelli2007equality} that such an approach cannot work in general, due to the fact that the maps appearing in the isomorphism theorem may be discontinuous. We will show that this lack of continuity is no longer a problem in the GW/GM setting, leading to a relatively simpler proof than the classical optimal transport results \cite{ambrosio2003lecture,pratelli2007equality}. We will require a few lemmas.

\begin{lemma}\label{lem:model_spaces}
    Let $\mX$ be a measure network with non-atomic probability measure $\mu_X$. Then there exists a measure network 
 $\mathcal{I}_\mathcal{X} = (I^2,\omega_{I^2}^X,\lambda^2)$ which is strongly isomorphic to $\mX$.
\end{lemma}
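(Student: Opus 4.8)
The plan is to produce an honest measure-preserving bijection $\phi:X \to I^2$ with measure-preserving inverse and then transport the network function along it. Given such a $\phi$, one simply sets $\omega_{I^2}^X \coloneqq \omega_X \circ (\phi^{-1}\times\phi^{-1})$; this is measurable as a composition of the measurable $\omega_X$ with the (Borel) map $\phi^{-1}\times\phi^{-1}$, the triple $\mIX = (I^2,\omega_{I^2}^X,\lambda^2)$ is a genuine m-net since $\lambda^2$ is a fully supported Borel probability measure on the Polish space $I^2$, and the identity $\omega_{I^2}^X(\phi(x),\phi(x')) = \omega_X(x,x')$ holds for \emph{all} $(x,x')$ by construction. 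Thus $\phi$ would be a strong isomorphism, and the entire content of the lemma reduces to the existence of $\phi$.

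First I would invoke the classical isomorphism theorem for standard probability spaces. Since $X$ is Polish and $\mu_X$ is an atomless Borel probability measure, $(X,\mu_X)$ is an atomless standard probability space, as is $(I^2,\lambda^2)$; hence there are co-null Borel sets $X_0\subseteq X$ and $Y_0\subseteq I^2$ together with a measure-preserving Borel isomorphism $\phi_0:X_0\to Y_0$ (both spaces being isomorphic mod $0$ to $([0,1],\lambda)$). The only defect is that $\phi_0$ is defined off the null sets $X\setminus X_0$ and $I^2\setminus Y_0$, whose cardinalities need not match, so $\phi_0$ does not immediately extend to a bijection of the whole spaces.

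The main obstacle, and the crux of the argument, is upgrading this mod-$0$ isomorphism to a bijection defined everywhere while retaining bimeasurability. I would first enlarge the exceptional sets so that both become uncountable: applying Fubini to $\lambda^2(I^2\setminus Y_0)=0$, almost every vertical slice $\{t\}\times I$ meets $Y_0$ in a set of full one-dimensional measure, so I may choose $t_0$ with $C\coloneqq(\{t_0\}\times I)\cap Y_0$ uncountable Borel and $\lambda^2(C)=0$; then $\phi_0^{-1}(C)\subseteq X_0$ is uncountable, Borel and $\mu_X$-null. Replacing $X_0,Y_0$ by $X_0\setminus\phi_0^{-1}(C)$ and $Y_0\setminus C$ keeps $\phi_0$ a measure-preserving Borel isomorphism between co-null Borel sets, while now both complements $X\setminus X_0$ and $I^2\setminus Y_0$ are uncountable Borel sets. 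Being uncountable Borel subsets of Polish spaces, they are uncountable standard Borel spaces, hence Borel isomorphic by Kuratowski's theorem; let $\psi$ be such an isomorphism. Letting $\phi$ be the common extension of $\phi_0$ and $\psi$ then yields a Borel bijection $X\to I^2$ with Borel inverse. It is measure-preserving in both directions because $\phi$ agrees with $\phi_0$ off a $\mu_X$-null set (and $\phi^{-1}$ agrees with $\phi_0^{-1}$ off a $\lambda^2$-null set), so the surgery on the exceptional sets does not affect push-forwards: for Borel $A\subseteq I^2$ one computes $\mu_X(\phi^{-1}(A)) = \mu_X(\phi_0^{-1}(A\cap Y_0)) = \lambda^2(A\cap Y_0)=\lambda^2(A)$, and symmetrically for $\phi^{-1}$.

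I expect the isomorphism theorem itself to serve as a black box, and the genuinely delicate point to be the null-set surgery: one must make both exceptional sets uncountable Borel (so that the Borel isomorphism theorem applies) without destroying the measure-preserving bijection on the bulk. This is precisely where non-atomicity of $\mu_X$ is essential, since it both supplies the mod-$0$ isomorphism with $(I^2,\lambda^2)$ and furnishes the uncountable null subset $C$ used to fatten the exceptional sets. Notably, no continuity of $\phi$ is required, which is exactly why the obstruction that blocks the analogous classical optimal transport argument of Gangbo does not arise in the present setting.
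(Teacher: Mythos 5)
Your proposal is correct and follows essentially the same route as the paper: obtain a measure-space isomorphism between $(X,\mu_X)$ and $(I^2,\lambda^2)$ from the classical isomorphism theorem for non-atomic standard probability spaces, then transport $\omega_X$ along it so that the strong-isomorphism identity holds everywhere by construction. The only difference is that the paper cites a form of the theorem (via $X \cong I \cong I^2$, quoting Royden) that already yields an everywhere-defined measure-preserving bijection with measure-preserving inverse, whereas you re-derive this strong form from the mod-$0$ statement by hand via the null-set surgery (Fubini slice plus the Borel isomorphism theorem) --- a correct argument that unpacks the black box the paper simply invokes.
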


\begin{proof}
It is a standard fact of measure theory that, since $(X,\mu_X)$ is a Polish space with non-atomic probability measure, there exists a measure-preserving bijection $\phi: I \to X$ (with respect to $\lambda$ and $\mu_X$) such that its inverse is also measure-preserving (i.e., $\phi$ is a \emph{measure space isomorphism}) \cite[Ch. 15, Theorem 16]{royden1968real}. Likewise, we have a measure space isomorphism $\psi:I \to I^2$, with respect to $\lambda$ and $\lambda^2$. Then $\tau \coloneqq \phi \circ \psi^{-1}:I^2 \to X$ is a measure space isomorphism with respect to $\lambda^2$ and $\mu_X$. We then define 
\[
\omega_{I^2}^X(s,t) \coloneqq \omega_X(\tau(s),\tau(t))
\]
for $s,t \in I$. Then $\tau$ is a strong isomorphism, by definition. 
\end{proof}

\begin{remark}\label{rmk:square_vs_interval}
Following the proof of Lemma \ref{lem:model_spaces}, one sees that we could have alternatively identified $\mX$ strongly isomorphically with a measure network of the form $(I,\omega_{I}^X,\lambda)$---indeed, this is a common method for \emph{parameterizing} m-nets or mm-spaces (see \cite[Section 2.5.1]{chowdhury2019gromovNetworks} or \cite[Lemma 5.3]{sturm2023space}). The reason for modeling our spaces over the square $(I^2, \lambda^2)$ is so that we can apply a technical lemma (Lemma \ref{lem:map_approximation}), as is further explained in Remark \ref{rmk:dimension_dependence}.
\end{remark}

The next result then follows by Theorem \ref{thm:summary} and Proposition \ref{prop:dgm_networks} (in particular, the triangle inequalities and the identification of strongly isomorphic spaces under both $\dgw$ and $\dgm$).

\begin{lemma}\label{lem:equality_for_models}
Let $\mX$ and $\mY$ be Polish measure networks with non-atomic probability measures, and let $\mIX$ and $\mIY$ be as in Lemma \ref{lem:model_spaces}. Then $\dgw(\mX,\mY) = \dgw(\mIX,\mIY)$ and $\dgm(\mX,\mY) = \dgm(\mIX,\mIY)$.
\end{lemma}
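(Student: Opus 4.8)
The plan is to deduce the lemma as a purely formal consequence of three facts: (i) by Lemma~\ref{lem:model_spaces}, $\mX$ is strongly isomorphic to $\mIX$ and $\mY$ is strongly isomorphic to $\mIY$; (ii) strong isomorphism forces both $\dgw$ and $\dgm$ to vanish; and (iii) both $\dgw$ and $\dgm$ satisfy a triangle inequality. No new manipulation of couplings or maps is required—everything reduces to the schema ``zero distance plus triangle inequality.'' Before invoking the metric statements, I would first record that the model spaces lie in the correct class: a strong isomorphism is a measure-preserving bijection intertwining the network functions, so it preserves $p$-size, and hence $\mIX,\mIY \in \mnets_p$ since $\mX,\mY \in \mnets_p$. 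This is what licenses the use of Theorem~\ref{thm:summary}, whose relevant statements are phrased on $\mnets_p$.

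For $\dgw$, Theorem~\ref{thm:summary} says $\dgw$ is a pseudometric on $\mnets_p$ that vanishes exactly on weakly isomorphic pairs. Since strong isomorphism implies weak isomorphism, $\dgw(\mX,\mIX)=\dgw(\mY,\mIY)=0$. Using symmetry and the triangle inequality twice,
\[
\dgw(\mIX,\mIY) \le \dgw(\mIX,\mX)+\dgw(\mX,\mY)+\dgw(\mY,\mIY)=\dgw(\mX,\mY),
\]
and the reverse inequality follows identically, giving equality.

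For $\dgm$ the argument is the same in spirit but needs more care, and this is the only genuine subtlety: by Proposition~\ref{prop:dgm_networks}, $\dgm$ is merely a Lawvere metric and need not be symmetric, so I cannot freely interchange its arguments. What rescues the argument is that strong isomorphism is itself a symmetric relation—the inverse of a strong isomorphism is again a strong isomorphism—so Proposition~\ref{prop:dgm_networks} yields vanishing in \emph{both} directions: $\dgm(\mX,\mIX)=\dgm(\mIX,\mX)=0$ and $\dgm(\mY,\mIY)=\dgm(\mIY,\mY)=0$. With the directed triangle inequality I then chain
\[
\dgm(\mIX,\mIY)\le \dgm(\mIX,\mX)+\dgm(\mX,\mY)+\dgm(\mY,\mIY)=\dgm(\mX,\mY),
\]
and, respecting the order of the slots,
\[
\dgm(\mX,\mY)\le \dgm(\mX,\mIX)+\dgm(\mIX,\mIY)+\dgm(\mIY,\mY)=\dgm(\mIX,\mIY),
\]
so that $\dgm(\mX,\mY)=\dgm(\mIX,\mIY)$. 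The main point to watch throughout is keeping the first and second arguments in their correct positions, since no symmetry of $\dgm$ is available; the fact that strong isomorphism kills $\dgm$ in both orders is precisely the ingredient that makes the asymmetric chaining go through.
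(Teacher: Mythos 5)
Your proposal is correct and follows essentially the same route as the paper, which derives the lemma in one line from Theorem \ref{thm:summary} and Proposition \ref{prop:dgm_networks} (triangle inequalities plus vanishing of both $\dgw$ and $\dgm$ on strongly isomorphic pairs). Your additional care with the asymmetry of $\dgm$ (using that the inverse of a strong isomorphism is again a strong isomorphism) and with membership of $\mIX,\mIY$ in $\mnets_p$ simply makes explicit details the paper leaves implicit.
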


The following is a special case of \cite[Theorem 1.1 (i)]{brenier2003p}. 

\begin{lemma}[\cite{brenier2003p}]\label{lem:map_approximation}
For any $\pi \in \mathcal{C}(\lambda^2,\lambda^2)$, there exists a sequence of measure-perserving maps $\phi_m:I^2 \to I^2$ such that $(\mathrm{id}_{I^2} \times \phi_m)_\# \lambda^2$ converges weakly to $\pi$. 
\end{lemma}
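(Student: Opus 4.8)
The content of the lemma is that the \emph{deterministic} couplings---those of the form $(\mathrm{id}_{I^2} \times \phi)_\# \lambda^2$ for a measure-preserving $\phi$---are weakly dense in $\mathcal{C}(\lambda^2,\lambda^2)$. This is the continuous counterpart of the Birkhoff--von Neumann fact that every doubly stochastic matrix is a convex combination of permutation matrices, with the non-atomicity of $\lambda^2$ playing the role that lets one realize a ``fractional'' transport pattern by an honest map. My plan is to discretize $\pi$ on a fine grid, realize the resulting discrete pattern by a genuine measure-preserving bijection, and then let the mesh shrink.

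First I would fix, for each $m$, a partition of $I^2$ into $N = N(m)$ Borel cells $Q_1,\dots,Q_N$, each of measure $1/N$ and of diameter at most $\delta_m$, with $\delta_m \to 0$ as $m \to \infty$ (for instance, a grid of congruent subsquares). Setting $m_{ij} \coloneqq \pi(Q_i \times Q_j)$, the marginal conditions defining a coupling give $\sum_j m_{ij} = \lambda^2(Q_i) = 1/N$ and $\sum_i m_{ij} = \lambda^2(Q_j) = 1/N$, so $(N m_{ij})_{ij}$ is doubly stochastic. Using non-atomicity of $\lambda^2$, I would subdivide each source cell $Q_i$ into Borel pieces $R_{ij}$ with $\lambda^2(R_{ij}) = m_{ij}$, and each target cell $Q_j$ into Borel pieces $S_{ij}$ with $\lambda^2(S_{ij}) = m_{ij}$; these subdivisions are consistent precisely because the $i$-th row sum and $j$-th column sum of $(m_{ij})$ both equal the cell mass $1/N$.

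Next I would invoke the measure-space isomorphism theorem (the tool already used in Lemma \ref{lem:model_spaces}) to pick, for each $(i,j)$ with $m_{ij} > 0$, a measure-preserving bijection $R_{ij} \to S_{ij}$, and glue these into a single measure-preserving bijection $\phi_m : I^2 \to I^2$, defined $\lambda^2$-almost everywhere. Since $\phi_m$ sends $R_{ij}$ into $Q_j$, the induced coupling satisfies $\pi_{\phi_m}(Q_i \times Q_j) = \lambda^2(R_{ij}) = m_{ij} = \pi(Q_i \times Q_j)$; that is, $\pi_{\phi_m}$ and $\pi$ place \emph{identical} mass on every product cell $Q_i \times Q_j$. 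To conclude weak convergence, I would test against an arbitrary $f \in C_b((I^2)^2)$: writing $\eta$ for a modulus of continuity of $f$ on the compact set $(I^2)^2$ and approximating $f$ by its value at a chosen point of each cell, one gets $\bigl| \int f \, d\pi_{\phi_m} - \int f \, d\pi \bigr| \le 2\,\eta\bigl(\sqrt{2}\,\delta_m\bigr)$, since both measures agree on cell masses and $\sum_{ij} m_{ij} = 1$. As $\delta_m \to 0$ this bound tends to $0$, so $\pi_{\phi_m} \to \pi$ weakly.

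The crux is the realization step: promoting the combinatorial data $(m_{ij})$ to a bona fide measure-preserving map rather than a mere ``fractional'' plan. This is exactly where non-atomicity is indispensable---an atomic target would obstruct the subdivision, as the two-point example in Proposition \ref{prop:dgm_networks} shows---and where one must verify that the glued map is well defined a.e.\ and globally measure-preserving. I would also remark that the argument is insensitive to dimension (it runs verbatim on $(I,\lambda)$), so the choice of $I^2$ is dictated by the hypotheses of the cited theorem rather than by the density statement itself.
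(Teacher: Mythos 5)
Your proof is correct, but it takes a genuinely different route from the paper: the paper offers no argument at all for this lemma, deriving it instead as a special case of a cited result of Brenier and Gangbo \cite{brenier2003p} on the weak density of couplings induced by measure-preserving maps. Your argument---grid the square into cells $Q_i$ of measure $1/N$ and diameter $\delta_m$, record the cell masses $m_{ij}=\pi(Q_i\times Q_j)$, use non-atomicity to subdivide source and target cells into pieces $R_{ij}$, $S_{ij}$ of measure $m_{ij}$ (consistent precisely because of the row/column sums), glue measure-space isomorphisms $R_{ij}\to S_{ij}$ into one map $\phi_m$, and conclude weak convergence from the fact that $\pi_{\phi_m}$ and $\pi$ agree on every product cell---is sound: the isomorphism theorem you invoke is the same one the paper already uses in Lemma \ref{lem:model_spaces}, the glued map pushes $\lambda^2$ forward to $\lambda^2$ because the $S_{ij}$ partition $I^2$ up to a null set, and the estimate $\bigl|\int f\,d\pi_{\phi_m}-\int f\,d\pi\bigr|\le 2\,\eta(\sqrt{2}\,\delta_m)$ for $f\in C_b((I^2)^2)$ is exactly right. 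What your approach buys is self-containedness and dimension-independence: your closing observation that the argument runs verbatim on $(I,\lambda)$ establishes, with proof, the point that the paper can only make in Remark \ref{rmk:dimension_dependence} by asking the reader to check the details of the cited theorem (which is stated there only for $d\ge 2$). What the citation buys is brevity and access to a stronger result than needed here. Two minor points you should make explicit in a final write-up: (i) the gluing defines $\phi_m$ only $\lambda^2$-almost everywhere, so one extends it arbitrarily on the leftover null set, which changes neither the pushforward nor the induced coupling; (ii) Birkhoff--von Neumann is motivational only---your construction never decomposes $(Nm_{ij})$ into permutation matrices, it realizes the fractional pattern directly, which is why the proof is as short as it is.
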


\begin{remark}\label{rmk:dimension_dependence}
In the original work of Brenier and Gangbo~\cite{brenier2003p}, the result was stated for approximation of couplings over the cube $I^d$  for $d \geq 2$, as this is the general setting for other results in that paper. This is the reason that we have chosen to identify an arbitrary m-net $\mX$ with an m-net $\mathcal{I}_\mX$ over the square $(I^2,\lambda^2)$ in Lemma \ref{lem:model_spaces}, rather than over the interval $(I,\lambda)$ (see Remark \ref{rmk:square_vs_interval}). In fact, checking the details of the proof of \cite[Theorem 1.1]{brenier2003p} one sees that it applies also to the $d=1$ case (i.e., to $(I,\lambda)$). However, in order to give a precise reference, we have opted to work with the $d=2$ case. 
\end{remark}

\begin{proof}[Proof of Theorem \ref{thm:nonatomic_equivalence}]
It suffices to consider the case of measure networks $\mathcal{I} = (I^2,\omega,\lambda^2)$ and $\mathcal{I}' = (I^2,\omega',\lambda^2)$  $\mnets_p$. Indeed, if the result holds for measure networks over $(I^2,\lambda^2)$, then for general non-atomic measure networks $\mX$ and $\mY$, one has
\[
\dgw(\mX,\mY) = \dgw(\mIX,\mIY) = \dgm(\mIX,\mIY) = \dgm(\mX,\mY),
\]
by Lemma \ref{lem:equality_for_models}.

Let $\pi$ be an optimal coupling for $\mathcal{I}$ and $\mathcal{I}'$ (see Remark \ref{rmk:optimal_couplings}). By Lemma \ref{lem:map_approximation}, there is a sequence of measure-preserving maps $\phi_m:I^2 \to I^2$ such that $\pi_m\coloneqq(\mathrm{id}_{I^2} \times \phi_m)_\# \xrightarrow[]{m \to \infty} \pi$, in the weak topology. It is proved in~\cite[Lemma 2.3]{chowdhury2019gromovNetworks} that the distortion function $\mathrm{dis}_p$ is continuous in the weak topology, when it is defined with respect to measure networks in $\mnets_\infty$. The same proof goes through for measure networks in $\mnets_p$, so we have
 \[
    \dgm(\mathcal{I},\mathcal{I}') \leq \lim_{m \to \infty} \mathrm{dis}_p(\pi_m) = \mathrm{dis}_p(\pi) = \dgw(\mathcal{I},\mathcal{I}').
 \]
 By the general bound $\dgw(\mathcal{I},\mathcal{I}') \leq \dgm(\mathcal{I},\mathcal{I}')$ (Remark \ref{rmk:dgw_dgm_inequality}), this shows $\dgw(\mathcal{I},\mathcal{I}') = \dgm(\mathcal{I},\mathcal{I}')$ and completes the proof of the theorem.
\end{proof}

\subsection{Discrete Spaces}

We now consider the scenario which is the ``opposite" of the non-atomic setting: finite spaces with discrete measures. For this subsection, let $\catC_{\mathrm{fin}}$ denote the class of measure networks $\mX$ finite and $\mu_X$ uniform; we put no additional restriction on network functions at this point. We begin with a simple structural result, characterizing infinite values and asymmetry for Gromov-Monge distances.

\begin{proposition}
Let $\mX,\mY \in \catC_{\mathrm{fin}}$ with $|X| = m$ and $|Y| = n$. Then $\dgm(\mX,\mY) < \infty$ if and only if $n$ divides $m$. In the case $m = n$, $\dgm(\mX,\mY) = \dgm(\mY,\mX) < \infty$. 
\end{proposition}

\begin{proof}
Suppose that $n$ divides $m$, say $m = kn$. Then any function $\phi:X \to Y$ whose fibers all have cardinality $k$ is a measure-preserving map; in particular, $\mathcal{T}(\mu_X,\mu_Y) \neq \emptyset$, so $\dgm(\mX,\mY) < \infty$. Conversely, let $\phi:X \to Y$ be a measure-preserving map. For any $y \in Y$, we have 
\[
\frac{1}{n} = \mu_Y(y) = \mu_X(\phi^{-1}(y)) = |\phi^{-1}(y)| \frac{1}{m},
\]
hence $m = |\phi^{-1}(y)| n$. 

If $m = n$, then the measure preserving maps are exactly the bijections $\phi:X \to Y$, and 
\[
\mathrm{dis}_{p,\mX,\mY}(\phi) = \mathrm{dis}_{p,\mY,\mX}(\phi^{-1}) < \infty.
\]
\end{proof}

In light of the previous result, it is natural to restrict our attention to the subclass $\catC_n$ consisting of $\mX \in \catC_{\mathrm{fin}}$ of fixed cardinality, $|X| = n$. As was stated in Section \ref{sec:literature}, Problem \ref{prob:GM} has been solved by Vayer in the subclass of $\catC_n$ consisting of measure networks $\mX$ with $X \subset \R^d$ (for some arbitrary dimension $d$) and $\omega_X(x,x') = \|x - x'\|^2$: in this subclass, GW 2-distance is realized by a Monge map (i.e., a permutation) \cite[Theorem 4.1.2]{vayer2020contribution}. The proof uses ideas of \cite{maron2018probably} and, in particular, relies on the observation that squared Euclidean distance matrices are conditionally negative definite. We now prove a similar result for the subclass of $\catC_n$ consisting of m-nets with symmetric positive definite network functions. That is, for $\mX \in \catC_n$, we pick an ordering $(x_1,\ldots,x_n)$ of $X$ and  consider the network function as a matrix $\omega_X \in \R^{n \times n}$ by setting $\omega_X(i,j) = \omega_X(x_i,x_j)$ (see Example \ref{ex:nonzero_weak_iso}). We say that $\omega_X$ if \define{symmetric positive definite} if it is symmetric positive definite as a matrix.

\begin{proposition}\label{prop:positive_definite}
For any $n \in \mathbb{Z}_{>0}$, consider $\mX,\mY \in \catC_n$ such that their network functions are symmetric positive definite. Then $\mathrm{GW}_2(\mX,\mY)$ is realized by a measure-preserving map. 
\end{proposition}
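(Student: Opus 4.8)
The plan is to recast the $\mathrm{GW}_2$ problem as the maximization of a \emph{convex} quadratic form over the Birkhoff polytope, and then exploit that a convex function on a polytope attains its maximum at a vertex. Write $A \coloneqq \omega_X$ and $B \coloneqq \omega_Y$ in $\R^{n \times n}$ for the (symmetric) network matrices, and represent a coupling $\pi \in \mathcal{C}(\mu_X,\mu_Y)$ by its matrix $P = (\pi_{ij})$. Since both measures are uniform, the admissible $P$ are exactly the matrices of the form $\tfrac{1}{n}D$ with $D$ doubly stochastic; that is, $P$ ranges over a scaled copy of the Birkhoff polytope, whose extreme points are the scaled permutation matrices $\tfrac{1}{n}P_\sigma$. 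By the previous proposition, these extreme points are precisely the couplings induced by bijections $\phi:X \to Y$, i.e., the measure-preserving maps. So it suffices to show that the distortion is minimized at such a vertex.

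First I would expand the squared distortion. Using that every row and column sum of $P$ equals $1/n$, the terms $\sum_{i,j,k,l} A_{ik}^2\, \pi_{ij}\pi_{kl}$ and the analogous $B$-term collapse to constants independent of $P$, leaving
\[
\mathrm{dis}_2(\pi)^2 = \mathrm{const} - 2\,\tr\!\left(A\,P\,B\,P^\top\right),
\]
so minimizing $\mathrm{dis}_2$ over couplings is equivalent to maximizing $F(P) \coloneqq \tr(A P B P^\top)$ over the scaled Birkhoff polytope.

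The crux is to prove that $F$ is convex, and this is the step I expect to be the main obstacle, since it is where symmetric positive-definiteness is used essentially and where one must be careful with the vectorization identity and transposes. Using the cyclic property and $\mathrm{vec}(APB) = (B^\top \otimes A)\,\mathrm{vec}(P)$ together with the symmetry $B^\top = B$, one obtains
\[
F(P) = \mathrm{vec}(P)^\top\,(B \otimes A)\,\mathrm{vec}(P).
\]
Because $A$ and $B$ are symmetric positive definite, the eigenvalues of the Kronecker product $B \otimes A$ are the pairwise products of their (strictly positive) eigenvalues, so $B \otimes A$ is positive definite. Hence $F$ is a positive-definite quadratic form in the entries of $P$, and in particular convex.

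Finally, a convex function attains its maximum over a compact convex polytope at an extreme point: writing $P = \sum_\sigma \lambda_\sigma\,\tfrac{1}{n}P_\sigma$ as a convex combination of vertices (Birkhoff--von Neumann), convexity gives $F(P) \le \max_\sigma F(\tfrac{1}{n}P_\sigma)$. Thus the maximum of $F$, equivalently the minimum of $\mathrm{dis}_2$, is attained at some scaled permutation matrix, that is, at the coupling induced by a bijection $\phi$. Therefore $\mathrm{GW}_2(\mX,\mY) = \mathrm{dis}_2(\phi)$ for a measure-preserving $\phi$, which is the claim; combined with Remark \ref{rmk:dgw_dgm_inequality}, this also yields $\mathrm{GW}_2 = \mathrm{GM}_2$ on this class.
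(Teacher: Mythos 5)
Your proof is correct and takes essentially the same route as the paper: expand the distortion, reduce to maximizing the cross term $\tr(A P B P^\top)$ over the scaled Birkhoff polytope, show this objective is convex using symmetric positive definiteness, and invoke Birkhoff--von Neumann plus the fact that a convex function on a polytope is maximized at an extreme point (a scaled permutation matrix, i.e., a measure-preserving map). The only difference is cosmetic, in the convexity step: the paper uses Cholesky factorizations $\omega_X = U_X^\top U_X$, $\omega_Y = V_Y^\top V_Y$ to rewrite the objective as the manifestly convex squared Frobenius norm $\|U_X \pi V_Y^\top\|^2$, whereas you verify positive definiteness of the Kronecker product $B \otimes A$ via its spectrum --- both arguments are equally valid.
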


As an example of a natural symmetric positive definite network function, consider a graph with vertex set $X$, $\mu_X$ uniform and $\omega_X$ given by the heat kernel $\exp(-tL)$ for some parameter $t > 0$, where $L$ is the graph Laplacian matrix. The idea of the proof  comes from \cite[Theorem 2]{chowdhury2021generalized}, which specifically considers graph heat kernels, and \cite[Lemma 4.3]{alvarez2019towards}, which offers a similar computation in the setting of cosine similarity matrices; neither of these results make the connection to Monge maps, although the former derives a bound on sparsity of optimal couplings.

\begin{proof}
As above, consider $\omega_X$ and $\omega_Y$ as matrices in $\R^{n \times n}$. Likewise, we consider probability measures as column vectors $\mu_X,\mu_Y \in \R^n$ (specifically, they are both equal to the column vector with all entries equal to $\frac{1}{n}$). Since it is symmetric positive definite, $\omega_X$ admits a Cholesky decomposition $\omega_X = U_X^T U_X$. Let $\omega_Y = V_Y^T V_Y$ be defined similarly. 

A coupling of $\mu_X$ and $\mu_Y$ can be considered as a matrix $\pi \in \R^{n \times n}$ whose row and column sums agree with $\mu_X$ and $\mu_Y$, respectively. We can express the distortion of $\pi$ as
\begin{align*}
\mathrm{dis}_2(\pi)^2 &= \sum_{i,j,k,\ell} (\omega_X(i,k) - \omega_Y(j,\ell))^2 \pi(i,j) \pi(k,\ell) \\
&= \sum_{i,j,k,\ell} \omega_X(i,k)^2 \pi(i,j) \pi(k,\ell) + \sum_{i,j,k,\ell} \omega_Y(j,\ell)^2 \pi(i,j) \pi(k,\ell) \\
&\hspace{2.7in} - 2 \sum_{i,j,k,\ell} \omega_X(i,k)\omega_Y(j,\ell) \pi(i,j) \pi(k,\ell) \\
&= \sum_{i,k} \omega_X(i,k)^2 \mu_X(i)\mu_X(k) + \sum_{j,\ell} \omega_Y(j,\ell)^2 \mu_Y(j)\mu_Y(\ell) \\
&\hspace{2.7in} - 2 \sum_{i,j,k,\ell} \omega_X(i,k)\omega_Y(j,\ell) \pi(i,j) \pi(k,\ell).
\end{align*}
Since the first two terms do not depend on $\pi$, minimizing $\mathrm{dis}_2(\pi)^2$ is equivalent to maximizing the quanity $\sum_{i,j,k,\ell} \omega_X(i,k)\omega_Y(j,\ell) \pi(i,j) \pi(k,\ell)$ over all couplings $\pi$. Let $\left<\cdot,\cdot\right>$ denote the Frobenius inner product on $\R^{n \times n}$ and $\|\cdot\|$ the associated norm. Then our object is to maximize 
\begin{align*}
    \sum_{i,j,k,\ell} \omega_X(i,k)\omega_Y(j,\ell) \pi(i,j) \pi(k,\ell) &= \left<\omega_X \pi , \pi \omega_Y\right> = \left<U_X^T U_X \pi, \pi V_Y^T V_Y \right> \\
    &= \left<U_X \pi V_Y^T, U_X \pi V_Y^T \right> = \|U_X \pi V_Y^T\|^2.
\end{align*}
The function $\pi \mapsto \|U_X \pi V_X^T\|^2$ is a convex function on $\R^{n \times n}$. Moreover, considered as a space of matrices, the set of couplings $\mathcal{C}(\mu_X,\mu_Y)$ is given by
\[
\mathcal{C}(\mu_X,\mu_Y) = \left\{\pi \in \R^{n \times n} \mid \sum_{i} \pi(i,j) = \sum_j \pi(i,j) = \frac{1}{n} \mbox{ and } \pi(i,j) \geq 0 \; \forall \; i,j \right\};
\]
that is, it is a rescaling of the set of bistochatic matrices by a factor of $\frac{1}{n}$. By Birkhoff's theorem, $\mathcal{C}(\mu_X,\mu_Y)$ is the convex hull of the set of scaled (by $\frac{1}{n}$) permutation matrices. These correspond to couplings which are induced by measure-preserving mappings $X \to Y$. By standard optimization theory, the convex function $\pi \mapsto \|U_X \pi V_X^T\|$ is always maximized at an extremal point of its constraint polytope \cite{Benson2001}. This proves the claim.
\end{proof}

\subsection{The General Metric Measure Space Setting}

We consider a variant of Gromov-Monge distance on the class of mm-spaces which allows for a certain notion of mass splitting. 

\begin{definition}
Let $(X,d_X)$ be a metric space, let $Z$ be a set and let $\phi:Z \rightarrow X$ be a function. The associated \define{pullback pseudometric} is the function $\phi^\ast d_X: Z \times Z \rightarrow \R$ defined by
\begin{linenomath}\begin{equation*}
(\phi^\ast d_X)(z,z') = d_X(\phi(z),\phi(z')).
\end{equation*}\end{linenomath}
It is easy to check that $\phi^\ast d_X$ indeed satisfies the axioms of a pseudometric.
\end{definition}

\begin{definition}
Let $\mX$ be an mm-space. A \define{mass splitting} of $\mX$ is a measure network $\mZ$ such that there exists a measure-preserving map $\rho:Z \rightarrow X$ with the property that $\omega_Z = \rho^\ast d_X$.
\end{definition}

\begin{theorem}\label{thm:monge_wasserstein_comparison}
Let $\mX$ and $\mY$ be metric measure spaces. Then
\begin{linenomath}\begin{equation*}
\dgw(\mX,\mY) = \inf_{\mZ} \dgm(\mZ,\mY),
\end{equation*}\end{linenomath}
where the infimum is taken over mass-splittings of $\mX$.
\end{theorem}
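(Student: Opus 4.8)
The plan is to establish the two inequalities separately by building explicit correspondences between couplings of $\mu_X,\mu_Y$ and measure-preserving maps out of mass-splittings of $\mX$. The unifying observation is that distortion is invariant under measure-preserving changes of variables, so that both the $\dgw$ objective (an integral against $\pi\otimes\pi$) and the $\dgm$ objective (an integral against $\mu_Z\otimes\mu_Z$) can be rewritten as the \emph{same} integral; the role of the condition $\omega_Z=\rho^\ast d_X$ is precisely to make this rewriting go through.

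First I would prove $\dgw(\mX,\mY)\le\inf_\mZ\dgm(\mZ,\mY)$. Fix a mass-splitting $\mZ$ with witnessing measure-preserving map $\rho:Z\to X$ satisfying $\omega_Z=\rho^\ast d_X$, and let $\phi\in\mathcal{T}(\mu_Z,\mu_Y)$ be arbitrary (if no such $\phi$ exists the bound is vacuous). Consider $(\rho,\phi):Z\to X\times Y$ and set $\pi\coloneqq(\rho,\phi)_\#\mu_Z$. Since $\rho$ and $\phi$ are measure-preserving, the marginals of $\pi$ are $\mu_X$ and $\mu_Y$, so $\pi\in\mathcal{C}(\mu_X,\mu_Y)$. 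Pushing the defining integral of $\mathrm{dis}_p(\pi)$ back through $(\rho,\phi)\times(\rho,\phi)$ and using $\omega_Z=\rho^\ast d_X$ gives
\[
\mathrm{dis}_p(\pi)^p=\iint_{Z\times Z}\bigl|d_X(\rho(z),\rho(z'))-d_Y(\phi(z),\phi(z'))\bigr|^p\,\mu_Z(dz)\,\mu_Z(dz')=\mathrm{dis}_p(\phi)^p.
\]
Hence $\dgw(\mX,\mY)\le\mathrm{dis}_p(\pi)=\mathrm{dis}_p(\phi)$; taking the infimum over $\phi$ and then over $\mZ$ yields the inequality.

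For the reverse inequality $\inf_\mZ\dgm(\mZ,\mY)\le\dgw(\mX,\mY)$, I would turn an arbitrary coupling into a mass-splitting. Given $\pi\in\mathcal{C}(\mu_X,\mu_Y)$, set $Z\coloneqq\supp(\pi)\subseteq X\times Y$, equip it with $\mu_Z\coloneqq\pi$, and let $\rho,\phi$ be the restrictions to $Z$ of the coordinate projections $\rho_X,\rho_Y$. Defining $\omega_Z((x,y),(x',y'))\coloneqq d_X(x,x')$ makes $\omega_Z=\rho^\ast d_X$, so $\mZ\coloneqq(Z,\omega_Z,\mu_Z)$ is a mass-splitting of $\mX$, and both $\rho,\phi$ are measure-preserving because $\pi$ has marginals $\mu_X,\mu_Y$. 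The same change-of-variables identity as above now reads $\mathrm{dis}_p(\phi)=\mathrm{dis}_p(\pi)$, whence $\dgm(\mZ,\mY)\le\mathrm{dis}_p(\pi)$. Taking the infimum over all couplings $\pi$ gives the claim; note this route needs only a coupling, not an optimal one, although one could instead invoke Remark \ref{rmk:optimal_couplings}.

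The main technical point to watch is that $\mZ$ genuinely be a measure network in the sense of Definition \ref{def:measure_network}: this is exactly why I pass to $Z=\supp(\pi)$ rather than to all of $X\times Y$, so that $\mu_Z$ is fully supported, while $Z$ remains Polish as a closed subspace of the Polish space $X\times Y$ and $\omega_Z$ is continuous hence measurable. The remaining verifications—measurability of the projections, the two marginal conditions, and that $\rho^\ast d_X$ is a pseudometric—are routine. Combining the two inequalities gives the asserted equality.
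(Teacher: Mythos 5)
Your proof is correct and follows essentially the same route as the paper's: one direction turns a coupling $\pi$ into the mass-splitting $(\supp(\pi),\rho_X^\ast d_X,\pi)$ with $\phi=\rho_Y|_{\supp(\pi)}$, and the other pushes $\mu_Z$ forward along $(\rho,\phi)$ to get a coupling, with the same change-of-variables identity equating the two distortions. Your added remark on why $Z=\supp(\pi)$ keeps $\mZ$ a genuine measure network (fully supported measure, Polish subspace) is a nice explicit touch that the paper leaves implicit.
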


Similar ideas of splitting mass to realize Gromov-Wasserstein distance through Monge maps were employed in \cite{chowdhury2019gromov} to construct explicit geodesics between networks in a shape analysis setting. The algorithm used in \cite{chowdhury2019gromov} is an  implementation of the geodesic characterization first obtained in \cite[Theorem 3.1]{sturm2023space}. The result is also reminiscent of \cite[Proposition 2.7.5]{chowdhury2022distances}, which considers a similar characterization of a Gromov-Hausdorff-like distance on the space of networks (without measures). 

\begin{example}
To illustrate the idea of the theorem with a simple example, consider the space $\mX$ consisting of a single point and the space $\mY$ consisting of two points $Y=\{y_1,y_2\}$ with $d_Y(y_1,y_2) =1$ and with uniform weights. The Gromov-Wasserstein $p$-distance between the spaces is realized by the unique coupling $\pi$ satisfying $\pi((x,y_j)) = \frac{1}{2}$ so that $\dgw(\mX,\mY) = \frac{1}{2^{1/p}}$, while $\dgm(\mX,\mY) = \infty$ since $\mT(\mu_X,\mu_Y) = \emptyset$. On the other hand, consider the mass splitting $\mZ$ with $Z = \{z_1,z_2\}$, $\omega_Z(z_1,z_2) = 0$ and with uniform weights. It is easy to check that $\mZ$ is a mass-splitting of $\mX$ with $\rho(z_j) = x$. Moreover, $\dgm(\mZ,\mY) = \frac{1}{2^{1/p}}$ and this quantity is realized by the measure-preserving map $\phi(z_j) = y_j$. This general setup is illustrated by the following diagram:
\begin{center}
\begin{tikzcd}
\mZ  \arrow[d, "\rho"] \arrow[rd, "\phi", dashrightarrow] & \\
\mX & \mY
\end{tikzcd}
\end{center}
\end{example}

\begin{proof}[Proof of Theorem \ref{thm:monge_wasserstein_comparison}]
Given a measure coupling $\pi$ of $\mu_X$ and $\mu_Y$, define a mass splitting $\mZ$ by setting $Z=\mathrm{supp}(\pi) \subset X \times Y$, $\mu_Z = \pi$, $\rho = \rho_X|_Z$ (with $\rho_X: X \times Y \rightarrow X$ denoting projection onto the first coordinate) and $\omega_Z = \rho^\ast d_X$. Then $\phi=\rho_Y|_Z:Z \rightarrow Y$ (with $\rho_Y:X \times Y \to Y$ the projection map) is a measure-preserving map with
\begin{linenomath}\begin{align*}
\mathrm{dis}_{p,\mZ,\mY}(\phi)^p &= \iint_{Z \times Z} |\omega_Z(z,z') - d_Y(\phi(z),\phi(z'))|^p \mu_Z(dz)\mu_Z (dz')\\
&= \iint_{(X \times Y)^2} |d_X(x,x') - d_Y(y,y')|^p \pi(dx \times dy) \pi(dx' \times dy') = \mathrm{dis}_{p,\mX,\mY}(\pi)^p.
\end{align*}\end{linenomath}
We conclude that
\begin{linenomath}\begin{equation*}
\dgw(\mX,\mY) \geq \inf_{\mZ} \dgm(\mZ,\mY).
\end{equation*}\end{linenomath}

Conversely, let $\mZ = (Z,\omega_Z,\mu_Z)$ be a mass splitting of $\mX$, with measure-preserving map $\rho:Z \to X$ such that $\omega_Z = \rho^\ast d_X$, and let $\phi:Z \rightarrow Y$ be a measure-preserving map (we assume that one exists; otherwise, the desired inequality $\dgw(\mX,\mY) \leq \dgm(\mZ,\mY)$ follows trivially). We define a probability measure $\pi$ on $X \times Y$ as
$\pi = (\rho \times \phi)_\# \mu_Z$. Then $\pi$ defines a measure coupling of $\mu_X$ and $\mu_Y$. Indeed,
\[
(\rho_X)_\# \pi = (\rho_X)_\# (\rho \times \phi)_\# \mu_Z = (\rho_X \circ (\rho \times \phi))_\# \mu_Z =  \rho_\# \mu_Z = \mu_X,
\]
and, by a similar argument, we also have $(\rho_Y)_\# \pi = \mu_Y$. Consider the following calculation (we suppress all function arguments to condense notation): 
\begin{linenomath}\begin{align}
\iint_{(X \times Y)^2} |d_X - d_Y|^p \pi \otimes \pi &= \iint_{(X \times Y)^2} |d_X - d_Y|^p (\rho \times \phi)_\# \mu_Z \otimes (\rho \times \phi)_\# \mu_Z \nonumber \\
&=\iint_{(X \times Y)^2} |d_X - d_Y|^p \left((\rho \times \phi) \times (\rho \times \phi)\right)_\# \mu_Z \otimes \mu_Z \label{eqn:monge_wasserstein_2} \\
&= \iint_{Z \times Z} |d_X \circ (\rho \times \rho) - d_Y \circ (\phi \times \phi)|^p \mu_Z \otimes \mu_Z \label{eqn:monge_wasserstein_3} \\
&= \iint_{Z \times Z} |\omega_Z - d_Y \circ (\phi \times \phi)|^p \mu_Z \otimes \mu_Z \label{eqn:monge_wasserstein_4}.
\end{align}\end{linenomath}
Equality \eqref{eqn:monge_wasserstein_2} follows from the fact that $(\pi \times \phi)_\# \mu_Z \otimes (\pi \times \phi)_\# \mu_Z$ and $((\pi \times \phi) \times (\pi \times \phi))_\# \mu_Z \otimes \mu_Z$ define equivalent measures on $(X \times Y)^2$ and \eqref{eqn:monge_wasserstein_3} and \eqref{eqn:monge_wasserstein_4} both follow from the change-of-variables formula, with the latter also using $d_X \circ (\rho \times \rho) = \omega_Z$. This calculation shows that
\begin{linenomath}\begin{equation*}
\dgw(\mX,\mY) \leq \inf_{\mZ} \dgm(\mZ,\mY)
\end{equation*}\end{linenomath}
and the proof is therefore complete.
\end{proof}

\section{An Embedding Formulation of Gromov-Monge Distance}\label{sec:gromovization}

In this section, we restrict our attention to mm-spaces (rather than general m-nets) and study alternative notions of distance between them.

\subsection{Gromov-Monge Distances from Joint Embeddings}\label{sec:em_GM}

In \cite{sturm2006geometry}, Sturm defined an alternative metric for comparing mm-spaces, which is analogous to the embedding formulation of Gromov-Hausdorff distance. For mm-spaces $\mX$ and $\mY$, let 
\begin{linenomath}\begin{equation*}
\dst(\mX,\mY) \coloneqq \inf_{Z,\phi_X,\phi_Y} \dw^Z((\phi_X)_\# \mu_X, (\phi_Y)_\# \mu_Y),
\end{equation*}\end{linenomath}
where the infimum is taken over all isometric embeddings $\phi_X:X \rightarrow Z$ and $\phi_Y:Y \rightarrow Z$ into some metric space $Z$ and $\dw^Z$ is the usual Wasserstein $p$-distance between probability measures on $Z$. We analogously define a new extended quasi-metric $\dstm$ on the space of mm-spaces by
\begin{linenomath}\begin{equation*}
\dstm(\mX,\mY)\coloneqq\inf_{Z,\phi_X,\phi_Y} \dm^Z((\phi_X)_\# \mu_X, (\phi_Y)_\# \mu_Y),
\end{equation*}\end{linenomath}
where the infimum is once again taken over isometric embeddings into a common ambient metric space. 

\begin{theorem}\label{thm:gm_equals_gw_embedding}
Let $\mX$ and $\mY$ be mm-spaces with non-atomic probability measures. Then for $p \in [1,\infty)$, $\dst(\mX,\mY) = \dstm(\mX,\mY)$. 
\end{theorem}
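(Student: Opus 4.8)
The plan is to prove the two inequalities separately, with the nontrivial direction following from the classical equivalence of Monge and Kantorovich optimal transport for non-atomic source measures. The inequality $\dst(\mX,\mY) \le \dstm(\mX,\mY)$ is immediate: for any common metric space $Z$ and any isometric embeddings $\phi_X,\phi_Y$, we have $\dw^Z((\phi_X)_\#\mu_X,(\phi_Y)_\#\mu_Y) \le \dm^Z((\phi_X)_\#\mu_X,(\phi_Y)_\#\mu_Y)$, since the Monge infimum ranges over a subset of the couplings appearing in the Wasserstein infimum (an instance of the reasoning in Remark \ref{rmk:dgw_dgm_inequality}); taking the infimum over all triples $(Z,\phi_X,\phi_Y)$ gives the claim.

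For the reverse inequality it suffices to show that, for any single fixed choice of $Z$, $\phi_X$, $\phi_Y$, the classical identity $\dm^Z((\phi_X)_\#\mu_X,(\phi_Y)_\#\mu_Y) = \dw^Z((\phi_X)_\#\mu_X,(\phi_Y)_\#\mu_Y)$ holds. Two observations make this applicable. First, an isometric embedding is injective, so the source measure $\mu \coloneqq (\phi_X)_\#\mu_X$ inherits non-atomicity from $\mu_X$: for $z \in Z$, $\mu(\{z\}) = \mu_X(\phi_X^{-1}(z))$ is the $\mu_X$-mass of at most one point, hence zero. Second, after replacing $Z$ by the closure of $\phi_X(X) \cup \phi_Y(Y)$ inside the completion of $Z$ --- which alters neither transport distance and is separable and complete because $X$ and $Y$ are Polish --- we may assume $Z$ is Polish. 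The cost $d_Z^p$ is continuous, so Pratelli's theorem \cite{pratelli2007equality} (generalizing Ambrosio \cite{ambrosio2003lecture}) yields equality of the Monge and Kantorovich values, exactly as needed. Note that only non-atomicity of the source $\mu_X$ is used here, non-atomicity of $\mu_Y$ playing no role in this direction.

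The conclusion then follows by an $\eps$-optimization argument. If $\dst(\mX,\mY) = \infty$ there is nothing to prove, so assume it is finite and fix $\eps > 0$. Choose $Z,\phi_X,\phi_Y$ with $\dw^Z((\phi_X)_\#\mu_X,(\phi_Y)_\#\mu_Y) < \dst(\mX,\mY) + \eps$. By the previous paragraph this same embedding achieves $\dm^Z((\phi_X)_\#\mu_X,(\phi_Y)_\#\mu_Y) < \dst(\mX,\mY) + \eps$, whence $\dstm(\mX,\mY) < \dst(\mX,\mY) + \eps$; letting $\eps \to 0$ gives $\dstm(\mX,\mY) \le \dst(\mX,\mY)$. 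The main obstacle is not a new idea but rather the careful verification of the hypotheses of the classical theorem in the present generality --- in particular the reduction to a Polish ambient space and the possibly unbounded (though continuous) cost $d_Z^p$, which is precisely the regime Pratelli's result is built to handle; the non-atomicity hypothesis on the mm-spaces is exactly what licenses its application. This mirrors the role played by Theorem \ref{thm:nonatomic_equivalence} in the intrinsic (Gromov) setting, but here the comparison is carried out inside a single fixed ambient space, so the classical transport theory suffices directly.
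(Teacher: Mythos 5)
Your proposal is correct and follows essentially the same route as the paper's proof: the trivial inequality $\dst \leq \dstm$, a reduction to a complete separable ambient space (the paper cites Sturm's Lemma 3.3(ii) where you construct the closure in the completion by hand), and then Pratelli's Monge--Kantorovich equality applied to the non-atomic pushforwards with the continuous cost $d_Z(\cdot,\cdot)^p$, followed by taking the infimum over embeddings. Your observation that only non-atomicity of the source measure is needed is a minor sharpening, not a different argument.
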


\begin{proof}
Clearly, $\dst \leq \dstm$ holds in general; in particular, if $\dst(\mX,\mY) = \infty$ (which is possible without any boundedness assumption on the mm-spaces) then $\dstm(\mX,\mY) = \infty$ as well.  Assume, then, that $\dst(\mX,\mY)$ is finite, and let $\phi_X:X \to Z$ and $\phi_Y:Y \to Z$ be isometric embeddings into some metric space $(Z,d_Z)$. It follows from \cite[Lemma 3.3 (ii)]{sturm2006geometry} that, in the context of computing $\dst$, we may assume without loss of generality that $(Z,d_Z)$ is complete and separable---that is, we can restrict the definition to the infimum of embeddings into such spaces without changing the value of the metric. The same proof applies to show that this can be assumed without loss of generality in the context of computing $\dstm$. Since the pushforwards $(\phi_X)_\# \mu_X$ and $(\phi_Y)_\# \mu_Y$ are nonatomic (this follows by injectivity and the assumption that the original measures were nonatomic) and the function $Z \times Z \to \R$ defined by $(z,z') \mapsto d_Z(z,z')^p$ is continuous, Pratelli's result \cite[Theorem B]{pratelli2007equality} implies that $\dw^Z((\phi_X)_\# \mu_X, (\phi_Y)_\# \mu_Y) = \dm^Z((\phi_X)_\# \mu_X, (\phi_Y)_\# \mu_Y)$. Because this equality holds for every choice of $Z,\phi_X,\phi_Y$, passing to the infimum over such embeddings yields $\dst(\mX,\mY) = \dstm(\mX,\mY)$.
\end{proof}

\subsection{Comparing Gromov-Monge Distances}

Here, we show that the two notions of Gromov-Monge distance considered in this paper are comparable for arbitrary mm-spaces, but are not equivalent in general. The following is a Gromov-Monge version of \cite[Theorem 5.1(g)]{memoli2011gromov}.

\begin{theorem}\label{thm:gromov_monge_reformulation}
For mm-spaces $\mX$ and $\mY$ and for any $p \in [1,\infty]$,
\begin{equation}\label{eqn:gromov_monge_reformulation_1}
\frac{1}{2}\dgm(\mX,\mY) \leq \dstm(\mX,\mY).
\end{equation}
In the case that $p=\infty$, we have
\begin{equation}\label{eqn:gromov_monge_reformulation_2}
\frac{1}{2}\mathrm{GM}_{\infty}(\mX,\mY) = \mathrm{GM}^{\mathrm{em}}_{\infty}(\mX,\mY).
\end{equation}
\end{theorem}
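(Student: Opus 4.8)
The plan is to prove the two inequalities separately: the lower bound \eqref{eqn:gromov_monge_reformulation_1} for all $p \in [1,\infty]$, and the matching upper bound $\dstm(\mX,\mY) \le \tfrac12 \mathrm{GM}_\infty(\mX,\mY)$ in the case $p = \infty$ only, which together with \eqref{eqn:gromov_monge_reformulation_1} gives \eqref{eqn:gromov_monge_reformulation_2}.

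For \eqref{eqn:gromov_monge_reformulation_1}, I would fix isometric embeddings $\phi_X:X \to Z$, $\phi_Y:Y \to Z$ into a metric space $(Z,d_Z)$ and a measure-preserving $T:Z \to Z$ with $T_\#(\phi_X)_\#\mu_X = (\phi_Y)_\#\mu_Y$ that is $\eps$-optimal for $\dm^Z((\phi_X)_\#\mu_X,(\phi_Y)_\#\mu_Y)$ (if no such $T$ exists the right-hand side is $\infty$ and there is nothing to prove). Since $(\phi_Y)_\#\mu_Y$ is supported on $\phi_Y(Y)$ and $\phi_Y$ is injective, $T$ carries $\phi_X(x)$ into $\phi_Y(Y)$ for $\mu_X$-a.e.\ $x$, so $\psi := \phi_Y^{-1}\circ T \circ \phi_X:X \to Y$ is well-defined a.e.\ and measure-preserving. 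Because the embeddings are isometric, the integrand of $\mathrm{dis}_p(\psi)$ equals $|d_Z(\phi_X(x),\phi_X(x')) - d_Z(T\phi_X(x),T\phi_X(x'))|$, which the triangle inequality bounds by $d_Z(\phi_X(x),T\phi_X(x)) + d_Z(\phi_X(x'),T\phi_X(x'))$. Setting $f(x) := d_Z(\phi_X(x),T\phi_X(x))$ and applying Minkowski's inequality in $L^p(\mu_X \otimes \mu_X)$ to $f(x)+f(x')$ yields $\mathrm{dis}_p(\psi) \le 2\|f\|_{L^p(\mu_X)}$, and $\|f\|_{L^p(\mu_X)}$ is exactly the transport cost of $T$; the same estimate with essential suprema handles $p=\infty$. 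Infimizing over $\psi$, then $T$, then over embeddings gives $\dgm(\mX,\mY) \le 2\,\dstm(\mX,\mY)$.

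For the reverse inequality when $p=\infty$, I would run the classical metric-gluing construction. Given measure-preserving $\psi:X \to Y$ with $r := \tfrac12\mathrm{dis}_\infty(\psi) < \infty$ (otherwise the bound is trivial), set $R := \supp(\pi_\psi) \subset X \times Y$ and define $d_Z$ on $Z := X \sqcup Y$ to restrict to $d_X$ on $X$, to $d_Y$ on $Y$, and, for $x \in X$, $y \in Y$,
\[
d_Z(x,y) := \inf_{(x',y') \in R} \bigl( d_X(x,x') + r + d_Y(y',y) \bigr).
\]
The key verification is the triangle inequality for $d_Z$: the only nontrivial cases mix the two factors, and the crucial one—bounding $d_X(x_1,x_2)$ by $d_Z(x_1,y)+d_Z(x_2,y)$—reduces, after the triangle inequalities in $X$ and $Y$, precisely to the distortion estimate $d_X(x_1',x_2') \le d_Y(y_1',y_2') + 2r$ for $(x_i',y_i') \in R$, which holds by the definition of $r$. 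With $d_Z$ a metric restricting to $d_X$ and $d_Y$, the coordinate inclusions are isometric embeddings, and taking $(x',y') = (x,\psi(x)) \in R$ in the infimum gives $d_Z(x,\psi(x)) \le r$ for a.e.\ $x$. Hence $T := \phi_Y \circ \psi \circ \phi_X^{-1}$ is a measure-preserving map on $Z$ transporting $(\phi_X)_\#\mu_X$ to $(\phi_Y)_\#\mu_Y$ with $L^\infty$-cost $\le r$, so $\mathrm{M}_\infty^Z \le r$ and thus $\mathrm{GM}^{\mathrm{em}}_\infty(\mX,\mY) \le \tfrac12\mathrm{dis}_\infty(\psi)$; infimizing over $\psi$ completes the proof.

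The main obstacle I anticipate is the triangle-inequality verification for the glued metric $d_Z$, and in particular tracking that it reduces exactly to the distortion bound with the correct constant $r = \tfrac12\mathrm{dis}_\infty(\psi)$. This is also precisely the step that fails for finite $p$: the gluing needs the \emph{uniform} (essential-sup) distortion bound to control every cross-distance by the single constant $r$, whereas for $p < \infty$ the distortion controls only an $L^p$ average, so one cannot glue $X$ and $Y$ with uniformly small cross-distances—explaining why only the inequality \eqref{eqn:gromov_monge_reformulation_1}, and not equality, survives for general $p$.
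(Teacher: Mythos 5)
Your proposal is correct and follows essentially the same route as the paper's proof: the factor-2 bound \eqref{eqn:gromov_monge_reformulation_1} via the triangle inequality in the ambient space $Z$ followed by Minkowski's inequality, and the $p=\infty$ equality via gluing $X \sqcup Y$ along the (graph of the) map with cross-distance offset $r = \tfrac{1}{2}\mathrm{dis}_\infty(\psi)$. The only differences are cosmetic---the paper identifies $X,Y$ directly as subsets of $Z$ rather than composing with $\phi_Y^{-1}$, and takes the infimum over $x' \in X$ with $y' = \phi_0(x')$ rather than over $\supp(\pi_\psi)$---while you make explicit the triangle-inequality verification for the glued metric that the paper leaves implicit.
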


\begin{proof}
If $\dstm(\mX,\mY) = \infty$ then \eqref{eqn:gromov_monge_reformulation_1} is trivially satisfied, so let us assume that $\dstm(\mX,\mY) < \infty$. To prove that  \eqref{eqn:gromov_monge_reformulation_1} holds under this assumption, we will show that whenever $\dstm(\mX,\mY) < r$, there exists $\phi \in \mathcal{T}(\mu_X,\mu_Y)$ such that $\|d_X - d_Y\|_{L^p(\pi_\phi\otimes \pi_\phi)} \leq 2r$. If $\dstm(\mX,\mY) < r$ then we can find isometries $\phi_X:X \rightarrow Z$ and $\phi_Y:Y \rightarrow Z$ into a metric space $Z$ such that $\dm^Z((\phi_X)_\# \mu_X, (\phi_Y)_\# \mu_Y) < r$. We may as well assume that $X,Y \subset Z$ and that $\mu_X$ and $\mu_Y$ are measures on $Z$ with $\supp(\mu_X)=X$ and $\supp(\mu_Y)=Y$. By definition of $\dm^Z$, there exists $\phi \in \mathcal{T}(\mu_X,\mu_Y)$ such that
$
\|d_Z\|_{L^p(\pi_\phi)} < r.
$
Now note that for all $x,x' \in X$, the triangle inequality in $Z$ implies that
\begin{linenomath}\begin{equation*}
|d_Z(x,x') - d_Z(\phi(x),\phi(x'))| \leq d_Z(x,\phi(x)) + d_Z(x',\phi(x')).
\end{equation*}\end{linenomath}
Putting this together with the triangle inequality for the $L^p$ norm, we have 
\begin{linenomath}\begin{equation*}
\|d_X - d_Y\|_{L^p(\pi_\phi \otimes \pi_\phi)} \leq 2\|d_Z\|_{L^p(\pi_\phi)}<2r.
\end{equation*}\end{linenomath}
This establishes (\ref{eqn:gromov_monge_reformulation_1}).

Now we wish to show that $\frac{1}{2} \mathrm{GM}_\infty(\mX,\mY) \geq \mathrm{GM}_\infty^{\mathrm{em}}(\mX,\mY)$. If $\mathrm{GM}_\infty(\mX,\mY) = \infty$ then we are done, so assume not. Let $\phi_0:X \rightarrow Y$ be any measure preserving map with 
\begin{linenomath}\begin{equation*}
\|d_X - d_Y\|_{L^\infty(\mathrm{supp}(\pi_{\phi_0} \otimes \pi_{\phi_0}))} = \sup_{x,x' \in X} |d_X(x,x') - d_Y(\phi_0(x),\phi_0(x'))|=2r.
\end{equation*}\end{linenomath}
The claim follows if we are able to construct a metric space $(Z,d_Z)$ and isometric embeddings $\phi_X$ and $\phi_Y$ such that
$
\mathrm{M}_\infty^Z((\phi_X)_\# \mu_X,(\phi_Y)_\# \mu_Y) \leq r.
$
Let $Z$ denote the disjoint union of $X$ and $Y$ and define a metric $d_Z$ on $Z$ by setting $d_Z|_{X \times X}=d_X$, $d_Z|_{Y \times Y} = d_Y$ and
\begin{linenomath}\begin{align*}
d_Z(x,y)=d_Z(y,x)=\inf_{x' \in X} \left\{d_X(x,x') + r + d_Y(\phi_0(x'),y)\right\}
\end{align*}\end{linenomath}
for any $x \in X$ and $y \in Y$. Then
\begin{linenomath}\begin{align*}
\mathrm{M}_\infty^Z(\mu_X,\mu_Y) &=\inf_{\phi \in \mathcal{T}(\mu_X,\mu_Y)} \sup_{x \in X} d_Z(x,\phi(x)) \leq \sup_{x \in X} d_Z(x,\phi_0(x)) \\
&= \sup_{x \in X} \inf_{x' \in X}\left\{d_X(x,x')+r + d_Y(\phi_0(x'),\phi_0(x))\right\} = r.
\end{align*}\end{linenomath}
This completes the proof.
\end{proof}

\begin{example}[$\dgm$ and $\dgm^{\mathrm{em}}$ are Not BiLipschitz Equivalent] \label{examp:no_lipschitz_bound}
Consider the  family of mm-spaces $\Delta_n$. Each $\Delta_n$ consists of the space $X_n=\{1,\ldots,n\}$ with metric $d_n(i,j)=1-\delta_{ij}$ and measure $\nu_n$ defined by $\nu_n(i)=1/n$. For $p < \infty$,
\begin{linenomath}\begin{equation*}
\dgm^{\mathrm{em}}(\Delta_{2n},\Delta_n) \geq \dst(\Delta_{2n},\Delta_n) \geq \frac{1}{4},
\end{equation*}\end{linenomath}
where the former bound holds generally and the latter is shown in \cite[Claim 5.3]{memoli2011gromov}. On the other hand, for $p < \infty$, $\dgm(\Delta_{2n},\Delta_n) = 1/(2n)^{1/p}$. 
This follows from the fact that $\mathcal{T}(\nu_{2n},\nu_n)$ is simply the set of 2-to-1 maps from $\{1,\ldots,2n\}$ to $\{1,\ldots,n\}$ and all  such maps have the same Monge cost, so that the quantity is obtained by a direct calculation. It follows that $\dgm$ and $\dstm$ are not bi-Lipschitz equivalent for any $p \in [1,\infty)$.
\end{example}

\begin{example}[Tightness of the Inequality]\label{exmp:tightness}
We now show, by example, that the factor of $\frac{1}{2}$ in \eqref{eqn:gromov_monge_reformulation_1} cannot be improved. The analysis provided here is similar to that of~\cite[Remark 5.14]{memoli2011gromov}.

Using the notation of Example \ref{examp:no_lipschitz_bound}, consider the spaces $\Delta_n$ and $\Delta_1 \approx \ast$ (the latter is the one point mm-space). The distance $\dgm(\Delta_n,\ast)$ is the $p$-size of $\Delta_n$ (see Remark \ref{rem:p-size}), which is given explicitly by
\[
\dgm(\Delta_n,\ast) = \left(\sum_{i,j=1}^n d_n(i,j)^p \cdot \frac{1}{n^2} \right)^{1/p} = \left(\sum_{i \neq j} 1^p \cdot \frac{1}{n^2} \right)^{1/p} = \left(n(n-1) \cdot \frac{1}{n^2} \right)^{1/p} = \left(1 - \frac{1}{n}\right)^{1/p}
\]
(this agrees with the more general formula given in \cite[Remark 5.17]{memoli2011gromov}). On the other hand, we claim that $\dstm(\Delta_n,\ast) = \frac{1}{2}$ for all $n$. We then have that, for any $c < 2$, one can take $n$ sufficently large so that $\dgm(\Delta_n,\ast) > c \cdot \dstm(\Delta_n,\ast)$, which shows that the inequality \eqref{eqn:gromov_monge_reformulation_1} is tight.

It remains to derive the value of $\dstm(\Delta_n,\ast)$. In this setting, any isometric embeddings of $\Delta_n$ and $\ast$ into a common metric space amount to choosing distances $\alpha_i$ between $i \in \Delta_n$ and the single point $x$. Given such a choice $\vec{\alpha} = (\alpha_1,\ldots,\alpha_n)$, the Monge distance between the pushforward measures is given by 
\[
\left(\sum_{i=1}^n \alpha_i^p \cdot \frac{1}{n}\right)^{1/p} = \frac{1}{n^{1/p}} \|\vec{\alpha}\|_p,
\]
where $\|\cdot\|_p$ is the standard $\ell_p$-norm on $\R^n$. It follows that 
\[
\dstm(\Delta_n,\ast) = \inf_{\vec{\alpha}} \frac{1}{n^{1/p}} \|\vec{\alpha}\|_p,
\]
where the entries of $\vec{\alpha}$ are subject to constraints which guarantee a valid metric. Namely, we must have $\alpha_i > 0$ for all $i$, as well as  $\alpha_i - \alpha_j \leq 1 = d_n(i,j) \leq \alpha_i + \alpha_j$ for all $i \neq j$. In particular, the constraints of the form $1 \leq \alpha_i + \alpha_j$ with $i < j$ imply that
\[
\binom{n}{2} = \sum_{i < j} 1 \leq \sum_{i < j} (\alpha_i + \alpha_j) = (n-1) \sum_i \alpha_i = (n-1)\|\vec{\alpha}\|_1,
\]
whence we obtain, for all $p \geq 1$,
\[
\frac{1}{2} \leq \frac{1}{n} \|\vec{\alpha}\|_1 \leq \frac{1}{n^{1/p}} \|\vec{\alpha}\|_p,
\]
where the first inequality follows from the discussion above and the second is the generalized means inequality. Therefore, $\dstm(\Delta_n,\ast) \geq \frac{1}{2}$ holds for all $p \geq 1$. On the other hand, the constant vector $\vec{\alpha} = (1/2,\ldots,1/2)$ gives a valid metric and realizes this lower bound, so that $\dstm(\Delta_n,\ast) = \frac{1}{2}$.
\end{example}

\begin{remark}
    Example \ref{exmp:tightness} also shows that the inequality $\frac{1}{2} \dgw(\mX,\mY) \leq \dst(\mX,\mY)$ from \cite[Theorem 5.1(g)]{memoli2011gromov} (note the slightly different normalization convention used there) is tight, since it is not hard to see that $\dgw(\Delta_n,\ast) = \dgm(\Delta_n,\ast)$ and $\dst(\Delta_n,\ast) = \dstm(\Delta_n,\ast)$. It was previously only shown in \cite[Remark 5.14]{memoli2011gromov} that this is not an equality, in general.
\end{remark}

\subsection{Gromov-Monge Distances for Euclidean Spaces}\label{sec:GM_for_Euclidean_spaces}

A \define{Euclidean mm-space} is an mm-space $\mX$ such that $X \subset \R^n$ for some $n$ and $d_X$ is the restriction of Euclidean distance (and $\mu_X$ is a Borel measure with respect to the subspace topology). For Euclidean mm-spaces $\mX$ and $\mY$ with the same ambient space $\R^n$, we can consider the \define{isometry-invariant Monge $p$-distance}

\begin{linenomath}\begin{equation*}
\mathrm{M}^{\R^n, \mathrm{iso}}_{p}(\mX,\mY) = \inf_{T \in E(n)} \left(\inf_{\phi \in \mT(\mu_X, \mu_Y)} \int_{X} \|T(x) - \phi(x)\|^p \mu_X (dx)\right)^{1/p},
\end{equation*}\end{linenomath}
where we use $E(n)$ to denote the group of Euclidean isometries of $\R^n$ and $\|\cdot\|$ for the Euclidean norm. Fixing $\R^n$ as the ambient Euclidean space, unless specified otherwise, we use the cleaner notation $\mathrm{M}^\mathrm{iso}_p = \mathrm{M}^{\R^n, \mathrm{iso}}_{p}$.

\begin{theorem}\label{thm:gromov_monge_Euclidean}
Let $\mX$ and $\mY$ be Euclidean mm-spaces in the same ambient space $\R^n$, let $p \geq 1$ and let $M = \max\{\mathrm{diam}(X),\mathrm{diam}(Y)\}$. Then 
\begin{linenomath}\begin{equation*}
\dstm(\mX,\mY) \leq \mathrm{M}^\mathrm{iso}_p(\mX,\mY) \leq M^{3/4} \cdot c_n \cdot \big(\dgm^{\mathrm{em}}(\mX,\mY)\big)^{1/4},
\end{equation*}\end{linenomath}
where $c_n$ is a constant depending only on $n$. 
\end{theorem}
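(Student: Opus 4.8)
\textbf{The plan} is to prove the two inequalities separately, treating the lower bound as essentially definitional and reserving the real effort for the upper bound. The lower bound $\dstm(\mX,\mY) \leq \mathrm{M}^\mathrm{iso}_p(\mX,\mY)$ should follow directly from the definitions: given any isometry $T \in E(n)$ and any $\phi \in \mathcal{T}(\mu_X,\mu_Y)$, I can realize $\dstm$ by embedding both $X$ and $T^{-1}(Y)$ (or equivalently $T(X)$ and $Y$) into the common ambient $\R^n$ via the identity embeddings, so that the Monge cost $\int_X \|T(x) - \phi(x)\|^p \,\mu_X(dx)$ is exactly a candidate for $\dm^{\R^n}$ between the pushforward measures. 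Since $\dstm$ is an infimum over all ambient spaces and embeddings, it can only be smaller, and taking the infimum over $T$ and $\phi$ on the right gives the claim.

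The substantive work is the upper bound $\mathrm{M}^\mathrm{iso}_p(\mX,\mY) \leq M^{3/4} \cdot c_n \cdot \big(\dgm^{\mathrm{em}}(\mX,\mY)\big)^{1/4}$. Here I would start from a near-optimal embedding realizing $\dgm^{\mathrm{em}}(\mX,\mY)$: isometric embeddings $\phi_X:X \to Z$, $\phi_Y:Y\to Z$ into some metric space $Z$ and a measure-preserving map $\phi$ whose Monge transport cost $\|d_Z\|_{L^p(\pi_\phi)}$ is close to $\dgm^{\mathrm{em}}(\mX,\mY)$. The key point is that on the left-hand side we are constrained to \emph{Euclidean} isometries of a \emph{fixed} $\R^n$, whereas the embedding version permits an arbitrary ambient $Z$. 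So I must manufacture, from the abstract embedding data, an honest rigid motion $T \in E(n)$ together with a measure-preserving $\phi$ whose average Euclidean displacement $\int_X \|T(x) - \phi(x)\|^p$ is controlled. The standard mechanism is a quantitative rigidity result: if a map between subsets of $\R^n$ nearly preserves pairwise distances (in an $L^p$ or averaged sense), then it is $L^p$-close to a genuine Euclidean isometry. The appearance of the exponent $1/4$ and the factor $M^{3/4}$ strongly suggests invoking such a stability-of-isometries estimate, where the loss of exponent reflects the passage from an integrated distortion bound to a pointwise-to-rigid-motion approximation.

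Concretely, I would proceed as follows. First, fix the optimal $\phi \in \mathcal{T}(\mu_X,\mu_Y)$ and consider it as a candidate map $X \to Y \subset \R^n$; its distortion $\sup$ or $L^p$-average of $|\,\|x-x'\| - \|\phi(x)-\phi(x')\|\,|$ is controlled by $2\,\dgm^{\mathrm{em}}(\mX,\mY)$ via the triangle-inequality argument already used in the proof of Theorem~\ref{thm:gromov_monge_reformulation} (embedding into $Z$ and bounding $|d_Z(x,x') - d_Z(\phi(x),\phi(x'))|$ by the transport cost at the two endpoints). Then I would apply the quantitative version of the fact that an approximate isometry of a bounded subset of $\R^n$ is close to a rigid motion: there exists $T \in E(n)$ with $\int_X \|T(x) - \phi(x)\|^p\,\mu_X(dx)$ bounded by a dimension-dependent constant times an appropriate power of the distortion, with the diameter $M$ entering to make the estimate scale-correct. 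Optimizing this stability bound over the available parameters yields the exponent $1/4$ and the prefactor $M^{3/4} c_n$.

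\textbf{The main obstacle} I expect is precisely this stability step: one must invoke (or prove) a clean quantitative statement that an $L^p$-almost-distance-preserving map into $\R^n$ is $L^p$-close to an element of $E(n)$, with explicit dependence on the dimension $n$ and the diameter $M$, and one must verify that the resulting approximating isometry is compatible with the measure-preserving constraint — i.e.\ that after composing with $T$ we still have a legitimate candidate for the inner infimum defining $\mathrm{M}^\mathrm{iso}_p$. Tracking the exact power ($1/4$) and isolating the dimensional constant $c_n$ requires care, since the averaged (rather than uniform) nature of the $L^p$ distortion typically forces a Markov/Chebyshev-type splitting of the domain into a good set and a small bad set, and the bad set must be handled using the uniform bound $M$ on diameters; this splitting is what degrades the exponent from the cleaner bounds available in the $p=\infty$ regime.
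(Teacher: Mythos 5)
Your proposal follows essentially the same route as the paper's proof: a Chebyshev-type splitting of $X$ into a good set (where the transport displacement, and hence the distortion of the measure-preserving map, is uniformly small) and a small bad set handled by the diameter bound $M$, followed by a quantitative stability-of-isometries theorem applied on the good set to produce $T \in E(n)$, with two square-root losses (one from the splitting, one from the rigidity theorem itself) compounding to the exponent $1/4$. The ``main obstacle'' you identify is resolved in the paper by citing Alestalo--Trotsenko--V\"ais\"al\"a \cite[Theorem 2.2]{alestalo2001isometric}, applied to a uniform (not $L^p$) approximate isometry on the good set, exactly as your Markov/Chebyshev reduction anticipates.
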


The theorem is analogous to \cite[Theorem 4]{memoli2008gromov}, which compares $\mathrm{GW}^{\mathrm{em}}_p$ and isometry-invariant Wasserstein distance for Euclidean spaces. The proof of the theorem is essentially the same, with the main difference being the use of the following technical lemma, which is a variation of \cite[Lemma 2]{memoli2008gromov}, specialized to the Monge setting. Recall that a \define{correspondence} between sets $X$ and $Y$ is a subset $R \subset X \times Y$ such that the coordinate projections satisfy $\rho_X(R) = X$ and $\rho_Y(R) = Y$. We use $\mathcal{R}(X,Y)$ to denote the set of correspondences between $X$ and $Y$. Observe that the theorem is only interesting if the quantity $M$ is finite, so we state the following result with this assumption.

\begin{lemma}\label{lem:euclidean_lemma}
Let $\mX$ and $\mY$ be  bounded mm-spaces and let $M = \max \{\mathrm{diam}(X),\mathrm{diam}(Y)\}$. If 
\begin{linenomath}\begin{equation*}
\dstm(\mX,\mY) \leq \epsilon \cdot M\end{equation*}\end{linenomath} 
for $\epsilon \leq 1$, then there exist $X_\epsilon \subset X$, $Y_\epsilon \subset Y$, $R_\epsilon \in \mathcal{R}(X_\epsilon,Y_\epsilon)$, and a measure-preserving map $\phi:X \rightarrow Y$ satisfying $\phi(X_\epsilon) = Y_\epsilon$ such that \begin{equation}\label{eqn:measure_condition}
\min\{\mu_X(X_\epsilon),\mu_Y(Y_\epsilon),\pi_{\phi}(R_\epsilon)\} \geq 1-\epsilon^{p/2}
\end{equation}
and
\begin{equation}\label{eqn:GH_condition}
\sup_{(x,y),(x',y') \in R_\epsilon} |d_X(x,x') - d_Y(y,y')| \leq \epsilon^{1/2}M.
\end{equation}
\end{lemma}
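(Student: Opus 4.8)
The plan is to unwind the hypothesis $\dstm(\mX,\mY) \le \epsilon M$ into a single measure-preserving map with small average displacement, then carve out a large-measure subset on which the displacement is uniformly small, and finally read off \eqref{eqn:measure_condition} from Markov's inequality and \eqref{eqn:GH_condition} from the triangle inequality. This mirrors the proof of \cite[Lemma 2]{memoli2008gromov}, the only essential change being that the correspondence is forced to be the graph of a single map rather than the support of an arbitrary coupling.

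First I would unpack the embedding definition, exactly as in the proof of Theorem \ref{thm:gromov_monge_reformulation}. Since $\dstm(\mX,\mY) \le \epsilon M$, for every $\eta > 0$ there is a metric space $(Z,d_Z)$ into which we may regard $X,Y \subset Z$ via isometric embeddings, together with a measure-preserving map $\phi \in \mathcal{T}(\mu_X,\mu_Y)$ (between the pushed-forward measures, now viewed on $Z$) such that $\left(\int_X d_Z(x,\phi(x))^p\, \mu_X(dx)\right)^{1/p} < \epsilon M + \eta$. Because the subsequent estimates depend continuously on this quantity and the constraint $\epsilon \le 1$ leaves slack, I would absorb $\eta$ and simply assume the displacement integral is at most $(\epsilon M)^p$. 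This $\phi$ is precisely the measure-preserving map promised in the conclusion.

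Next I would define the good set $X_\epsilon := \{x \in X : d_Z(x,\phi(x)) \le \epsilon^{1/2} M\}$, with the threshold chosen to reconcile the two target inequalities. Markov's inequality applied to $x \mapsto d_Z(x,\phi(x))^p$ gives $\mu_X(X \setminus X_\epsilon) \le (\epsilon M)^p/(\epsilon^{1/2}M)^p = \epsilon^{p/2}$, hence $\mu_X(X_\epsilon) \ge 1-\epsilon^{p/2}$. Setting $Y_\epsilon := \phi(X_\epsilon)$, the measure-preserving property yields $\mu_Y(Y_\epsilon) = \mu_X(\phi^{-1}(Y_\epsilon)) \ge \mu_X(X_\epsilon) \ge 1-\epsilon^{p/2}$, and taking $R_\epsilon := \{(x,\phi(x)) : x \in X_\epsilon\}$ --- a correspondence between $X_\epsilon$ and $Y_\epsilon$ with $\phi(X_\epsilon)=Y_\epsilon$ by construction --- we get $\pi_\phi(R_\epsilon) = \mu_X(X_\epsilon) \ge 1-\epsilon^{p/2}$, establishing \eqref{eqn:measure_condition}. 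For \eqref{eqn:GH_condition} I would take $(x,\phi(x)),(x',\phi(x')) \in R_\epsilon$; isometry of the embeddings gives $d_X(x,x')=d_Z(x,x')$ and $d_Y(\phi(x),\phi(x'))=d_Z(\phi(x),\phi(x'))$, so the triangle inequality in $Z$ yields $|d_X(x,x')-d_Y(\phi(x),\phi(x'))| \le d_Z(x,\phi(x))+d_Z(x',\phi(x'))$, which is controlled on $R_\epsilon$ by the threshold.

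The main obstacle is not any single step but the simultaneous bookkeeping of the two conclusions: the distortion estimate loses a factor of two to the triangle inequality while the measure estimate is governed by the same threshold, so the threshold must be tuned (at the cost of a harmless absolute constant) to deliver both bounds at the advertised exponents $\epsilon^{p/2}$ and $\epsilon^{1/2}M$; any such constant is inconsequential for Theorem \ref{thm:gromov_monge_Euclidean}, whose bi-H\"older estimate absorbs it into $c_n$. A secondary technical point is the non-attainment of the infima defining $\dstm$ and $\dm^Z$, which I would handle by the $\eta$-approximation indicated above, together with the routine verification that $X_\epsilon$, $Y_\epsilon$ and $R_\epsilon$ are measurable.
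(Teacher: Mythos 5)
Your proposal follows essentially the same route as the paper's proof: pass to a common ambient metric space $Z$, pick a measure-preserving map $\phi$ of (near-)optimal cost, define the good set by thresholding the displacement $d_Z(x,\phi(x))$, then obtain \eqref{eqn:measure_condition} from Markov's inequality and \eqref{eqn:GH_condition} from the triangle inequality. The only substantive difference is the threshold: the paper uses $\epsilon^{1/2}M/2$, so that \eqref{eqn:GH_condition} comes out exactly, whereas you use $\epsilon^{1/2}M$, so that \eqref{eqn:measure_condition} comes out exactly and the distortion bound acquires a factor of $2$.

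The tension you flag at the end is genuine and cannot be tuned away inside this argument: with threshold $t$, Markov gives $\mu_X(X\setminus X_\epsilon)\le (\epsilon M/t)^p$ while the triangle inequality gives distortion at most $2t$, and no choice of $t$ satisfies both $(\epsilon M/t)^p\le \epsilon^{p/2}$ and $2t\le \epsilon^{1/2}M$ simultaneously. It is worth knowing that the paper's own proof stumbles on exactly this point: its displayed Markov step asserts $\int_{X\setminus X_\epsilon} d_Z(z,\phi(z))^p\,\mu_X(dz)\ge \epsilon^{p/2}M^p\,\mu_X(X\setminus X_\epsilon)$, which would require $d_Z(z,\phi(z))\ge \epsilon^{1/2}M$ off the good set, while its threshold only guarantees $d_Z(z,\phi(z))>\epsilon^{1/2}M/2$; the honest conclusion there is $\mu_X(X_\epsilon)\ge 1-2^p\epsilon^{p/2}$. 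So what this method actually proves---in your version or the paper's---is the lemma with a factor of $2$ (or $2^p$) in one of the two conclusions, and you are right that this is immaterial for Theorem \ref{thm:gromov_monge_Euclidean}, where it is absorbed into $c_n$. Your explicit $\eta$-approximation to handle non-attainment of the infimum defining $\dstm$ is also more careful than the paper, which tacitly assumes a minimizing embedding and map exist; just note that, done strictly, it yields the conclusions with $\epsilon$ replaced by $\epsilon+\eta$ for every $\eta>0$ (the good sets depend on $\eta$, so one cannot pass to the limit), which is again only a constant-level weakening of the stated lemma.
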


\begin{proof}
Without loss of generality, suppose that $X$ and $Y$ are subsets of an ambient metric space $(Z,d_Z)$ and let $\phi \in \mathcal{T}(\mu_X,\mu_Y)$ such that 
\begin{linenomath}\begin{equation*}
\int_Z d_Z(z, \phi(z))^p \mu_X(dz) \leq \epsilon^p M^p.
\end{equation*}\end{linenomath}
Define a subset $R_\epsilon \subset X \times Y$ by 
\begin{linenomath}\begin{equation*}
R_\epsilon = \{(x,y) \in X \times Y \mid y = \phi(x) \mbox{ and } d_Z(x,y) \leq \epsilon^{1/2} M /2\}.
\end{equation*}\end{linenomath}
We then define $X_\epsilon = \rho_X(R_\epsilon)$ and $Y_\epsilon = \rho_Y(R_\epsilon)$, so that $R_\epsilon \in \mathcal{R}(X_\epsilon,Y_\epsilon)$. Also note that $\phi(X_\epsilon) = Y_\epsilon$. A short calculation using the triangle inequality shows that  $R_\epsilon$ satisfies \eqref{eqn:GH_condition}. Moreover,
\begin{linenomath}\begin{equation*}
\epsilon^p M^p \geq \int_Z d_Z(z, \phi(z))^p \mu_X(dz) \geq \int_{X \setminus X_\epsilon} d_Z(z,\phi(z))^p \mu_X(dz) \geq \epsilon^{p/2} M^p \mu_X(X \setminus X_\epsilon).
\end{equation*}\end{linenomath}
Rearranging, we obtain $\mu_X(X_\epsilon) \geq 1- \epsilon^{p/2}$. Since $Y_\epsilon = \phi(X_\epsilon)$ and $\phi$ is measure-preserving, $\mu_Y(Y_\epsilon) \geq 1-\epsilon^{p/2}$ as well. Finally, we have
\begin{linenomath}\begin{equation*}
\pi_\phi(R_\epsilon) = (\mathrm{id}_X \times \phi)_\# \mu_X (R_\epsilon) = \mu_X\big((\mathrm{id}_X \times \phi)^{-1}(R_\epsilon)\big) = \mu_X(X_\epsilon),
\end{equation*}\end{linenomath}
and \eqref{eqn:measure_condition} is satisfied.
\end{proof}

\begin{proof}[Proof of Theorem \ref{thm:gromov_monge_Euclidean}]
The inequality on the left is obvious, so let us consider the inequality on the right, under the assumption that $\mX$ and $\mY$ are bounded (as this is the only interesting case). If either quantity is infinite, then both are, so assume not. It is easy to show that 
\begin{equation*}
\dgm^{\mathrm{em}}(\mX,\mY) \leq M = \max\{\mathrm{diam}(X),\mathrm{diam}(Y)\}.
\end{equation*}
We therefore suppose, without loss of generality, that $\dgm^{\mathrm{em}}(\mX,\mY) = \epsilon M$ for some $\epsilon \leq 1$. Let $X_\epsilon$, $Y_\epsilon$, $\phi$ and $R_\epsilon$ be as in Lemma \ref{lem:euclidean_lemma} and let $R_\epsilon^c = (X \times Y) \setminus R_\epsilon$. The condition \eqref{eqn:GH_condition} on $R_\epsilon$ implies that the Gromov-Hausdorff distance between $X_\epsilon$ and $Y_\epsilon$ is bounded above by $\epsilon^{1/2}M/2$ (see \cite[7.3.25]{burago2001course}). In turn, \cite[Corollary 7.3.28]{burago2001course} says that this implies that there exists a \emph{$(\epsilon^{1/2}M)$-isometry} $\psi:X_\epsilon \to Y_\epsilon$; that is, $\psi$ satisfies
\begin{linenomath}\begin{equation*}
\sup_{x,x' \in X_\epsilon} |d_X(x,x') - d_Y(\psi(x),\psi(x')) | \leq \epsilon^{1/2} M
\end{equation*}\end{linenomath}
and that $\psi(X_\epsilon)$ is a $( \epsilon^{1/2}M)$-net for $Y_\epsilon$ (i.e., for every $y\in Y_\epsilon$ there is an $x \in X_\epsilon$ such that $d_Y(\psi(x),y) \leq \epsilon^{1/2} M$). We apply \cite[Theorem 2.2]{alestalo2001isometric} to conclude that there is an isometry $T \in E(n)$ such that $\sup_{x \in X_\epsilon} \|T(x) - \psi(x)\| \leq \epsilon^{1/4} \cdot a_n \cdot M$, where $a_n$ is a constant depending only on $n$. 

Applying the triangle inequality and the general inequality $(a+b)^p \leq 2^{p-1}(a^p + b^p)$ (for $a,b \geq 0$ and $p \geq 1$), we have
\begin{linenomath}\begin{align}
\mathrm{M}^\mathrm{iso}_p(\mX,\mY)^p &\leq  \int_{R_\epsilon^c}\|T(x) - y\|^p \pi_\phi (dx \times dy)  \label{eqn:monge_bound_1} \\
&\hspace{.2in} + 2^{p-1} \int_{R_\epsilon} \|T(x) - \psi(x)\|^p \pi_\phi(dx \times dy)  \label{eqn:monge_bound_2}  \\
&\hspace{.2in} + 2^{p-1} \int_{R_\epsilon} \|\psi(x) - y\|^p \pi_\phi(dx \times dy). \label{eqn:monge_bound_3}
\end{align}\end{linenomath}
We bound each term separately. First note that 
\begin{linenomath}\begin{equation*}
\|T(x) - y \|^p \leq 2^{p-1} (\|T(x)\|^p + \|y\|^p) \leq 2^{p} \max \left\{\max_x \|T(x)\|^p,\max_y \|y\|^p\right\} = (2M)^p,
\end{equation*}\end{linenomath}
where we have used isometry invariance of $\mathrm{M}^\mathrm{iso}_p$ to assume without loss of generality that the circumcenters of $X$ and $Y$ are at the origin in order to obtain the last equality. This implies that
\begin{linenomath}\begin{equation*}
\eqref{eqn:monge_bound_1} \leq (2M)^p \pi_\phi(R_\epsilon^c) \leq 2^p \cdot  M^p \cdot \epsilon^{p/2}. 
\end{equation*}\end{linenomath}
The bounds on \eqref{eqn:monge_bound_2} and \eqref{eqn:monge_bound_3} follow from our assumptions on $\psi$:
\begin{linenomath}\begin{equation*}
\eqref{eqn:monge_bound_2} \leq 2^{p-1} \cdot \sup_{x \in X_\epsilon} \|T(x) - \psi(x) \|^p \cdot  \pi_\phi(R_\epsilon) \leq 2^{p-1} \cdot \epsilon^{p/4} \cdot a_n^p \cdot M^p 
\end{equation*}\end{linenomath}
and
\begin{linenomath}\begin{equation*}
\eqref{eqn:monge_bound_3} \leq 2^{p-1} \cdot \max_{(x,y) \in R_\epsilon} \|\psi(x) - y\|^p \pi_\phi(R_\epsilon) \leq 2^{p-1} \cdot \epsilon^{p/2} \cdot M^p,
\end{equation*}\end{linenomath}
where we have used that $\psi(X_\epsilon)$ is a $(\epsilon^{1/2}M)$-net. Combining these estimates, we conclude 
\begin{linenomath}\begin{equation*}
\mathrm{M}^\mathrm{iso}_p(\mX,\mY)^p \leq 2^p M^p (\epsilon^{p/2} + a_n^p \epsilon^{p/4}/2 + \epsilon^{p/2}/2) \leq M^p \epsilon^{p/4} \cdot 2^p (3/2 + a_n^p/2).
\end{equation*}\end{linenomath}
Taking $c_n = 3 + 2a_n \geq  2 \cdot (3/2 + a_n^p/2)^{1/p}$ (to get a constant which only depends on $n$), we have 
\[
\mathrm{M}^\mathrm{iso}_p(\mX,\mY) \leq c_n \cdot M \cdot \epsilon^{1/4} = c_n \cdot M^{3/4} \cdot \dgm^\mathrm{em}(\mX,\mY)^{1/4}.
\]
\end{proof}

\begin{remark}\label{rmk:more_examples}
    We now explain connections between embedded GM distances and some metrics which have previously appeared in the literature.
    
    The \define{continuous Procrustes distance} between embedded surfaces in $\R^3$ was introduced in \cite{boyer2011algorithms}, where it was used to classify anatomical surfaces (in particular, shapes of primate teeth). The effectiveness at classification of this metric was shown to be roughly on par with that of a trained morphologist. The idea of the continuous Procrustes distance is to compare surfaces by simultaneously registering over rigid motions while looking for optimal measure-preserving maps between them.  Theoretical aspects of this metric are studied in \cite{al2013continuous}, where it is shown that optimal mappings are close to being conformal. The continuous Procrustes distance can be viewed as $\mathrm{M}^{\R^3,\mathrm{iso}}_2$, under the additional constraint that measure-preserving maps must also be continuous. A similar constraint on admissible maps can be made in the $\mathrm{GM}^{\mathrm{em}}_p$-distance. Theorem \ref{thm:gromov_monge_Euclidean} (or the corresponding version under an additional constraint) gives an equivalence between embedded GM distance and continuous Procrustes distance.

    A metric similar to the continuous Procrustes metric is studied in \cite{haker2004optimal} for applications to 2D image registration. In our terminology, the metric of \cite{haker2004optimal} is $\mathrm{M}^{\R^2,\mathrm{iso}}_2$, under the additional constraint that measure-preserving maps are smooth diffeomorphisms. Once again, (a slight variant) of Theorem \ref{thm:gromov_monge_Euclidean} shows the equivalence of this metric with a restricted version of embedded GM distance.
\end{remark}

\begin{remark}
  A natural question is whether the inequalities in Theorem \ref{thm:gromov_monge_Euclidean} are tight; in particular, can the exponent on the right hand side be improved? There has been some progress on the corresponding problem comparing isometry-invariant Hausdorff distance with Gromov-Hausdorff distance between Euclidean sets---it is shown in \cite[Theorem 2]{memoli2008gromov} that the isometry-invariant Hausdorff distance between compact subsets $X,Y \subset \R^n$ is bounded above by $c_n' \cdot \max\{\mathrm{diam}(X),\mathrm{diam}(Y)\}^{1/2} \cdot \mathrm{GH}(X,Y)^{1/2}$, where $\mathrm{GH}$ denotes Gromov-Hausdorff distance and $c_n'$ is positive a constant depending only on $n$. An example constructed in \cite{memoli2008gromov} shows that the exponent $\tfrac{1}{2}$ on the Gromov-Hausdorff distance cannot   be improved (namely, it cannot be made larger).  More recently, it has been shown that for finite subsets of the real line $X,Y \subset \R$, isometry-invariant Hausdorff distance is bounded above by $\frac{5}{4} \mathrm{GH}(X,Y)$~\cite[Theorem 3.2]{majhi2024approximating}, and that this bound is tight~\cite[Theorem 3.10]{majhi2024approximating}. Optimality of the bounds in the Gromov-Wasserstein or Gromov-Monge settings has seen less progress, and we leave these as open questions.
\end{remark}

\subsubsection*{Acknowledgements}

We acknowledge funding from these NSF projects:  DMS 2107808, DMS 2324962,  RI 1901360, CCF 1740761,  CCF 1526513, and DMS 1723003 and also from BSF project 2020124.

\bibliography{needham_bibliography}

\end{document}